\documentclass[10pt]{amsart}
\usepackage{amsmath,amssymb,amsthm,amsfonts}
\usepackage{mathrsfs,eufrak,euscript}
\usepackage{enumerate}
\usepackage{graphicx}

\usepackage{tikz}
\theoremstyle{plain}
\newtheorem{theorem}[subsection]{Theorem}
\newtheorem{proposition}[subsection]{Proposition}
\newtheorem{lemma}[subsection]{Lemma}
\newtheorem{eg}[subsection]{Example}
\newtheorem{eg's}[subsection]{Examples}

\newtheorem{definition}[subsection]{Definition}
\newtheorem{remark}[subsection]{Remark}

\newtheorem{note}[subsection]{Note}
\newtheorem{corollary}[subsection]{Corollary}
\title[Multiplication form]{Spectral theorem for quaternionic  normal operators : Multiplication form}
\author[G. Ramesh {\protect \and} P. Santhosh Kumar]{G. Ramesh {\protect
		\and} P. Santhosh Kumar}
\address{G. Ramesh, Department of Mathematics, Kandi, Sangareddy, Medak, Telangana, India 502285.}
\email{rameshg@iith.ac.in}

\address{P. Santhosh Kumar, Department of Mathematics, Kandi, Sangareddy, Medak, Telangana, India 502285.}
\email{ma12p1004@iith.ac.in}
\subjclass[2010]{47S10, 47B15, 35P05.}

\keywords{slice complex plane,  quaternionic Hilbert space, right linear operator, normal operator, spectral measure, spectral theorem, functional calculus}
\begin{document}
\maketitle	
\begin{abstract}
Let $\mathcal{H}$ be a right quaternionic Hilbert space and let $T$ be a quaternionic  normal operator with the  domain $\mathcal{D}(T) \subset \mathcal{H}$.  Then for a fixed unit imaginary quaternion  $m$, there exists a Hilbert basis $\mathcal{N}_{m}$ of $\mathcal{H}$, a measure space $(\Omega, \mu)$,     a unitary operator $U \colon \mathcal{H} \to L^{2}(\Omega; \mathbb{H}; \mu)$ and a $\mu$ - measurable function $\phi \colon \Omega \to \mathbb{C}_m$ (here $\mathbb{C}_{m} = \{\alpha + m \beta; \;\alpha, \beta \in \mathbb{R}\}$) such that
\[
Tx = U^{*}M_{\phi}Ux, \; \mbox{for all}\; x\in \mathcal{D}(T),
\]
where $M_{\phi}$ is the multiplication operator on $L^{2}(\Omega; \mathbb{H}; \mu)$ induced by $\phi$ with $ U(\mathcal{D}(T)) \subseteq \mathcal{D}(M_{\phi})$. In the process, we prove that every complex Hilbert space is a slice Hilbert space.
		
We establish these results  by  reducing it to the complex case then lift it to the quaternionic case.
\end{abstract}
\section{Introduction and Preliminaries}
In 1936, Birkhoff and von Neumann \cite{Birkhoff} introduced the idea of formulating quantum mechanics in quaternion setting. Later several authors continued the study of quaternionic Hilbert spaces in various directions (see \cite{Adler, Agarwal, Dunford, Finkelstein, Finkelstein2,Finkelstein3,viswanath} for details). There was no suitable notion of spectrum of quaternionic linear operators until the concept of {\it spherical spectrum} was proposed, in 2007,  by Colombo, Gentile, Sabadini, and Struppa  \cite{Colombo}. By using the concept of {\it spherical spectrum},  Alpay, Colombo and Kimsey \cite{Alpay} proved the spectral theorem for unbounded quaternionic normal operator. In   \cite{Ghiloni,Ghiloni2}, Ghiloni, Moretti and Perotti defined the continuous slice functional calculus and proved spectral theorem in quaternion setting.

In quantum mechanics most of the operators we encounter are unbounded, for example, position operator, momentum operator and Schr\"odinger operator \cite{Naylor}. The similar situation occur in quaternionic setting also. One of the most important operators in quantum mechanic is the position operator, which is nothing but a multiplication operator defined on a Hilbert space. This is a normal operator.
In fact, it is well known, in the classical theory of operators, that every normal operator is a multiplication operator induced by a suitable function. One can ask whether the same is true or not in quaternionic setting. Though this question is addressed in various forms in the literature (see for example \cite{Ghiloni2, viswanath}), we prove a version of the multiplication form of the spectral theorem, which exactly look like the classical one.

We organize this article in four sections. In the first section we recall basic properties of the ring of quaternions, quaternionic Hilbert spaces and quaternionic operators. In the second section, we prove the following results:
\begin{itemize}
	\item every complex Hilbert space is a slice Hilbert space
	\item  a linear operator between two  complex Hilbert spaces can be extended to a unique  right linear operator between quaternionic Hilbert spaces, and
	\item multiplication form of the spectral theorem for bounded quaternionic normal operator.
\end{itemize}	
In the final section, we extended the spectral theorem for unbounded quaternionic normal operators  via the bounded transform.
\subsection{Quaternion ring}
The set of all expressions of the form  $q = q_{0}+q_{1}i+q_{2}j+q_{3}k$,
where $q_{\ell} \in \mathbb{R}$ for $\ell = 0,1,2,3$ is denoted by $\mathbb{H}$. Here $i,j,k$ satisfy the following:
\begin{equation}\label{ijk}
i^{2} = j^{2} = k^{2} = -1 = i\cdot j \cdot k.
\end{equation}
The addition of two expressions in $\mathbb{H}$ is same as in $\mathbb{C}$, and the multiplication is given by Equation (\ref{ijk}). Note that $\mathbb{H}$ is a non commutative division ring called quaternion ring and the expressions in $\mathbb{H}$ are called quaternions.
Let $q = q_{0}+q_{1}i+q_{2}j+q_{3}k$. Then the conjugate of $q$ is denoted by $\overline{q}$, is defined by $\overline{q} = q_{0}-q_{1}i-q_{2}j-q_{3}k$.  The real part of $q,\; re(q) := q_{0}$ and the imaginary part of $q,\;  im(q) = q_{1}i+q_{2}j+q_{3}k$. The modulus of $q$ is defined by
\begin{equation} \label{defofmodulus}
|q|= \sqrt{\overline{q} q} =  \sqrt{ \sum\limits_{\ell =  0}^{3} q_{\ell}^{2}}.
\end{equation}
The imaginary unit sphere is defined by $\mathbb{S} := \{q \in \mathbb{H}\colon \overline{q}=-q,\; |q|=1\}$. For $m\in \mathbb{S}$,  $ \mathbb{C}_{m}:= \{\alpha + m \beta : \alpha, \beta \in \mathbb{R}\}$ is a real subalgebra of $\mathbb{H}$, called the slice of $\mathbb{H}$. In fact,  $\mathbb{C}_{m}$ is  isomorphic to  the complex field $\mathbb{C}$ through the mapping $\alpha + m \beta \to \alpha + i \beta$.  Note that for $m \neq \pm n \in \mathbb{S}$,  we have $\mathbb{C}_{m} \cap \mathbb{C}_{n} = \mathbb{R}$. Moreover, $\mathbb{H} = \bigcup\limits_{m\in \mathbb{S}}\mathbb{C}_{m}$.  The upper half plane of $\mathbb{C}_{m}$ is defined by $\mathbb{C}_{m}^{+} = \big\{\alpha + m \beta;\; \alpha \in \mathbb{R},\; \beta \geq 0\big\}$. Let $p,q \in \mathbb{H}$. Then the relation defined by $p \sim q \Leftrightarrow p = {s}^{-1}q s, \; \text{for some} \;  s \in \mathbb{H}\setminus \{0\}$ is an equivalence relation \cite{Ghiloni}. The equivalence class of $p$, denoted by $[p]$, is given by
\begin{equation*}
[p] = \Big\{p^{\prime}\colon \; re(p) = re(p^{\prime}),\; |im(p)|=|im(p^{\prime})|\Big\}.
\end{equation*}

\begin{note}
	All the results in  complex Hilbert spaces holds true  in  $\mathbb{C}_{m}$- Hilbert space, for any $m \in \mathbb{S}$.
\end{note}
\begin{definition} \cite[Definition 2.3]{Ghiloni}
	A map $\left\langle \cdot | \cdot \right\rangle \colon \mathcal{H} \times \mathcal{H} \to \mathbb{H}$ is said to be an inner product on a right $\mathbb{H}$- module $\mathcal{H}$  if it satisfy the following properties:
		\begin{enumerate}
		\item  $\left\langle x | x \right\rangle \geq 0$, for all $x \in \mathcal{H}$. In particular, $ \left\langle x | x \right\rangle =0 \Leftrightarrow x = 0$.
		\item $\left\langle x | y + z \cdot q\right\rangle = \left\langle x | y\right\rangle  + \left\langle x | z \right\rangle \cdot q $, for all $x,y \in \mathcal{H}$ and $q \in \mathbb{H}$.
		\item $\left\langle x | y \right\rangle = \overline{\left\langle y | x \right\rangle }$, for all $x,y \in \mathcal{H}$.
	\end{enumerate}
	Moreover, if $\mathcal{H}$ is complete with respect to the  norm defined by $\|x\| := \sqrt{\left\langle x | x\right\rangle}$, for all $x \in \mathcal{H}$, then $\mathcal{H}$ is called a right quaternionic Hilbert space.
\end{definition}
\begin{note}
	Throughout this article $ \mathcal{H}$ denotes a right quaternionic Hilbert space and we call it as quaternionic Hilbert space.
\end{note}
\begin{eg}
	Let $(\Omega, \mu)$ be a measure space. Then
	\begin{equation*}
	L^{2}(\Omega; \mathbb{H}; \mu) : = \Big\{f \colon \Omega \to \mathbb{H} \; | \int\limits_{\Omega}|f(x)|^{2}d\mu(x) < \infty\Big\}
	\end{equation*}
	is a right quaternionic Hilbert space with the inner product defined by
	\begin{equation*}
	\left\langle f | g\right\rangle = \int\limits_{\Omega} \overline{f(x)}\cdot g(x) \ d\mu(x).
	\end{equation*}
	Let $m \in \mathbb{S}$. Then
	\begin{equation*}
	L^{2}(\Omega; \mathbb{C}_{m}; \mu) = \Big\{f \colon \Omega \to \mathbb{C}_{m} \; | \int\limits_{\Omega} |f(x)|^{2} d\mu(x) < \infty\Big\}
	\end{equation*}
	is a $\mathbb{C}_{m}$- Hilbert space with the inner product defined by
	\begin{equation*}
	\left\langle f | g\right\rangle = \int\limits_{\Omega} \overline{f(x)}\cdot g(x) \ d\mu(x).
	\end{equation*}
\end{eg}
\begin{definition}
	Let $(\Omega, \mu)$ be a measure space and fix $m\in \mathbb{S}$. If $\phi \colon \Omega \to \mathbb{C}_{m}$ is measurable, then
	\begin{enumerate}
		\item essential supremum of $|\phi|$ is defined by
		\begin{equation*}
		ess\ sup(|\phi|) = \Big\{\alpha\in \mathbb{R} : \mu \big(\{x: |\phi(x)| > \alpha\}\big) = 0 \Big\}.
		\end{equation*}
		\item essential range of $\phi$ is defined by
		\begin{equation*}
		ess\ ran(\phi) := \Big\{\lambda \in \mathbb{C}_{m} : \ \mu \big(\{x : |\phi(x) - \lambda | =0 \}\big)> \epsilon, \forall \ \epsilon >0 \Big\}.
		\end{equation*}
		\item $\phi$ is said to be essentially bounded if $ess \ sup(|\phi|)$ is finite.
	\end{enumerate}
\end{definition}
Now we define Hilbert basis of a right quaternionic Hilbert space (see \cite[Proposition 2.5]{Ghiloni} for details).
\begin{definition}
A subset $\mathcal{N}$ of  $\mathcal{H}$ is said to be a Hilbert basis of $\mathcal{H}$ if, for every $z,z^{\prime} \in \mathcal{N}$, we have $\left\langle z | z^{\prime}\right\rangle = \delta_{z,z^{\prime}}$ and $\left\langle x | y \right\rangle = \sum\limits_{z \in \mathcal{N}} \left\langle x | z\right\rangle \left\langle z | y \right\rangle$ for all $x,y \in \mathcal{H}$.
\end{definition}
Note that every quaternionic Hilbert space $\mathcal{H}$ admits a Hilbert basis $\mathcal{N}$ (see \cite[Proposition 2.6]{Ghiloni}). Moreover, every $x \in \mathcal{H}$ can be uniquely decomposed as follows:
\begin{equation*}
x = \sum\limits_{z \in \mathcal{N}} z \left\langle z | x \right\rangle.
\end{equation*}
\begin{definition}
	A map $T \colon \mathcal{D}(T) \subset \mathcal{H}_{1} \to \mathcal{H}_{2}$  with the domain $\mathcal{D}(T),$ a right linear subspace of $\mathcal{H}_{1}$ is said to be right $\mathbb{H}$- linear or quaternionic operator, if $T(x\cdot q + y) = T(x) \cdot q + T(y)$, for all $x,y \in \mathcal{D}(T)$, $q\in \mathbb{H}$. In particular, $T$ is called
	\begin{enumerate}
		\item densely defined operator, if $\mathcal{D}(T)$ is a dense subspace of $\mathcal{H}_{1}$
		\item closed operator, if the graph of $T$, defined by $\mathcal{G}(T) = \Big\{(x, Tx) | x \in \mathcal{D}(T) \Big\}$, is a closed subspace of $\mathcal{H}_{1}\times \mathcal{H}_{2}$
		\item bounded or continuous operator, if $\|Tx\|_{2} \leq K \|x\|_{1}, \; \text{for all} \; x \in \mathcal{D}(T)$, for some $K>0$.  In this case, the norm of $T$, defined by
		\begin{equation*}
		\|T\| = \sup \Big\{\|Tx\|_{2}:\; x \in \mathcal{D}(T), \|x\|_{1}=1\Big\},
		\end{equation*}
		is finite.
	\end{enumerate}
\end{definition}
We denote the set of all densely defined closed operators and bounded  operators between $\mathcal{H}_{1}$ and $\mathcal{H}_{2}$ by $\mathcal{C}(\mathcal{H}_{1}, \mathcal{H}_{2})$ and $ \mathcal{B}(\mathcal{H}_{1}, \mathcal{H}_{2})$ respectively. In particular, $\mathcal{C}(\mathcal{H},\mathcal{H}) = \mathcal{C}(\mathcal{H})$ and $\mathcal{B}(\mathcal{H},\mathcal{H}) = \mathcal{B}(\mathcal{H})$. Let $S, T \in \mathcal{C}(\mathcal{H})$. Then  $S$ is  a restriction of $T$ (or) $T$ is an extension of $S$, denoted by $S \subset T$, if $\mathcal{D}(S) \subseteq \mathcal{D}(T)$ and $ Sx = Tx $, for all $x \in \mathcal{D}(S)$.
%
%
\begin{definition}\cite[Definition 2.12]{Ghiloni}
	Let $ T \in \mathcal{C}(\mathcal{H})$ with the domain $\mathcal{D}(T) \subseteq \mathcal{H}$. Then there exists unique operator $T^{*} \in \mathcal{C}(\mathcal{H})$ with the domain given by \begin{equation*}
	\mathcal{D}(T^*) = \big\{y \in \mathcal{H}:\; x \mapsto \left\langle y|Tx \right\rangle \;\text{is continuous on}\; \mathcal{D}(T)\big\}
	\end{equation*} such that $ \left\langle x | Ty\right\rangle = \left\langle T^{*}x | y\right\rangle $, for all $ x \in \mathcal{D}(T),y \in \mathcal{D}(T^*). $ This operator $ T^{*} $ is called the adjoint of $T.$ Furthermore, $T$ is said to be
	\begin{enumerate}
		\item self-adjoint  $(T^{*}=T)$, if $\mathcal{D}(T)= \mathcal{D}(T^{*})$ and $ T^{*}x = Tx$, for all $x \in \mathcal{D}(T)$
		\item anti self-adjoint $(T^{*}=-T)$, if $\mathcal{D}(T)= \mathcal{D}(T^{*})$ and $ T^{*}x = -Tx$, for all $x \in \mathcal{D}(T)$
		\item positive $(T\geq 0)$, if $T^{*} = T$ and $\left\langle x|Tx \right\rangle \geq 0$, for all $ x \in \mathcal{D}(T)$
		\item normal $( T^{*}T=TT^{*})$, if $ \mathcal{D}(T^{*}T)=\mathcal{D}(TT^{*})$ and $T^{*}Tx=TT^{*}x$ for all $ x \in \mathcal{D}(T^{*}T)$.
	\end{enumerate}
\end{definition}
\begin{eg} \cite[Example 1.1]{viswanath}
	Let $(\Omega, \mu)$ be a $\sigma$- additive measure space, $m \in \mathbb{S}$ and  $\phi \colon \Omega \to \mathbb{C}_{m}$ be  measurable. Define $M_{\phi}\colon \mathcal{D}(M_{\phi})\subseteq L^{2}(\Omega; \mathbb{H}; \mu) \to L^{2}(\Omega; \mathbb{H}; \mu)$ by $M_{\phi}(g)(x) = \phi(x) \cdot g(x),$ for all $g \in \mathcal{D}(M_{\phi})$, where
	\begin{equation*}
	\mathcal{D}(M_{\phi}) = \big\{g \in L^{2}(\Omega; \mathbb{H}; \mu):\; \phi \cdot g \in L^{2}(\Omega; \mathbb{H}; \mu) \big\}.
	\end{equation*}
	Then $M_{\phi}$ is a quaternionic operator. Moreover, $M_{\phi} \in \mathcal{B}(L^{2}(\Omega; \mathbb{H}; \mu))$if and only if $\phi$ is essentially bounded. In this case, $\|M_{\phi}\| = ess \ sup(|\phi|)$.
	\end{eg}

Now we recall the definition of the spherical spectrum of quaternionic operators (see \cite[Definition 4.1]{Ghiloni}).

\subsection{Spherical spectrum:}
Let  $ T \colon \mathcal{D}(T) \subseteq \mathcal{H} \to \mathcal{H} $  be a right linear operator with domain $\mathcal D(T)$, a right linear subspace of $\mathcal{H}$ and $ q \in \mathbb{H}.$ Define  $\Delta_{q}(T) \colon \mathcal{D}(T^{2}) \to \mathcal{H}$ by
\begin{equation*}
\Delta_{q}(T):= T^{2}-T(q+\overline{q})+I\cdot|q|^{2}.
\end{equation*}
The spherical resolvent of $T$, denoted by $\rho_{S}(T)$, is defined as the set of all $q \in \mathbb{H}$ satisfying the following three properties:
\begin{enumerate}
	\item $N(\Delta_{q}(T)) = \{0\}$.
	\item $R(\Delta_{q}(T))$ is dense in $\mathcal{H}$.
	\item $\Delta_{q}(T)^{-1}\colon R(\Delta_{q}(T)) \to \mathcal{D}(T^{2})$ is bounded.
\end{enumerate}
Then the spherical spectrum of $T$ is defined by $\sigma_{S}(T): = \mathbb{H} \setminus \rho_{S}(T)$.
\begin{remark} \label{J}
	Let $m\in \mathbb{S}$, $J\in\mathcal{B}(\mathcal{H})$ be anti self-adjoint and unitary that is $J^{*}=-J \;\&\; J^{2}=-I$. Then
	\begin{equation*}
	\mathcal{H}^{Jm}_{\pm}= \Big\{x \in \mathcal{H}: J(x)= \pm x\cdot m\Big\}
	\end{equation*}
	is a $\mathbb{C}_{m}$- Hilbert space, called slice Hilbert space (see \cite[Lemma 3.10]{Ghiloni}\label{directsumofH} for details). Note that considering $\mathcal{H}$ as a $\mathbb{C}_{m}$- Hilbert space, $\mathcal{H}^{Jm}_{\pm}$ are non-zero subspaces of $\mathcal{H}$. Moreover, $\mathcal{H}$ admits the following decomposition:
	\begin{equation*}
	\mathcal{H} = \mathcal{H}^{Jm}_{+} \oplus \mathcal{H}^{Jm}_{-}.
	\end{equation*}
	Furthermore, by \cite[Proposition 3.8(f)]{Ghiloni}, if $\mathcal{N}$ is Hilbert basis of $\mathcal{H}^{Jm}_{+}$, then $\mathcal{N}$ is also a Hilbert basis of $\mathcal{H}$ and $ J(x) = \sum\limits_{z\in \mathcal{N}} z \cdot m \left\langle  z | x \right\rangle$. Since $x\cdot n \in \mathcal{H}^{Jm}_{-}$ ($n \in \mathbb{S}$ such that $mn=-nm$), for all $x \in \mathcal{H}^{Jm}_{+}$,  we have $\left\langle x_{+} | x_{-}\right\rangle + \left\langle x_{-}| x_{+}\right\rangle = 0$, for  $x_{\pm} \in \mathcal{H}^{Jm}_{\pm}$.
\end{remark}
It is observed from Remark \ref{J} that every slice Hilbert space is a $\mathbb{C}_{m}$- Hilbert space, for some $m \in \mathbb{S}$. We prove the converse, that is  every $\mathbb{C}_{m}$- Hilbert space is a slice Hilbert space, in the following section.
\section{Extension of $\mathbb C_m$-Hilbert space to a quaternionic Hilbert space}
It is proved in the literature that   a $\mathbb{C}_{m}$- linear operator $(m \in \mathbb{S})$ on a slice Hilbert space $\mathcal{H}^{Jm}_{+}$ can be extended uniquely to a right linear operator on $\mathcal{H}$  (see \cite{Ghiloni} for details) and the converse is true with some condition. We recall the result here.
\begin{proposition} \cite[Proposition 3.11]{Ghiloni}\label{extension}	
	If $T \colon \mathcal{D}(T) \subset \mathcal{H}^{Jm}_{+} \to \mathcal{H}^{Jm}_{+} $ is a $\mathbb{C}_{m}-$ linear operator, then there exists a unique right $\mathbb{H}-$ linear operator $
	\widetilde{T}\colon \mathcal{D}(\widetilde{T})\subset \mathcal{H} \to \mathcal{H} $
	such that $ \mathcal{D}(\widetilde{T}) \bigcap \mathcal{H}^{Jm}_{+} = \mathcal{D}(T), \ J(\mathcal{D}(\widetilde{T})) \subset \mathcal{D}(\widetilde{T})$ and $ \widetilde{T}(x) = T(x), $ for every $x \in \mathcal{H}^{Jm}_{+}.$ The following facts holds:
	\begin{enumerate}
		\item If $T \in \mathcal{B}(\mathcal{H}^{Jm}_{+})$, then $\widetilde{T}\in \mathcal{B}(\mathcal{H})$ and $\|\widetilde{T}\| = \|T\|$
		\item $J\widetilde{T} = \widetilde{T} J$.
	\end{enumerate}
	On the other hand, let $V \colon \mathcal{D}(V) \to \mathcal{H}$ be a right linear  operator. Then $ V = \widetilde{U} $, for a unique bounded $\mathbb{C}_{m}-$ linear operator $ U \colon \mathcal{D}(V) \bigcap \mathcal{H}^{Jm}_{+} \to \mathcal{H}^{Jm}_{+} $ if and only if $J(\mathcal{D}(V)) \subset \mathcal{D}(V)$ and $JV = VJ$.
	
	Furthermore,
	\begin{enumerate}
		\item If $\overline{\mathcal{D}(T)} = \mathcal{H}^{Jm}_{+}$, then $\overline{\mathcal{D}(\widetilde{T})} = \mathcal{H}$ and  $\big(\widetilde{T}\big)^{*} = \widetilde{T^{*}}$
		\item \label{extnmulti}If $S \colon \mathcal{D}(S) \subset \mathcal{H}^{Jm}_{+} \to \mathcal{H}^{Jm}_{+}$ is $\mathbb{C}_{m}$- linear, then $\widetilde{ST} = \widetilde{S} \widetilde{T}$
		\item \label{extninverse}If $S$ is the inverse of $T$, then $\widetilde{S}$ is the inverse of $\widetilde{T}.$
	\end{enumerate}
\end{proposition}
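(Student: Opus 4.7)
The plan is to use the norm-orthogonal decomposition $\mathcal{H} = \mathcal{H}^{Jm}_{+} \oplus \mathcal{H}^{Jm}_{-}$ to realise every vector of $\mathcal{H}$ as a pair in $\mathcal{H}^{Jm}_{+}$, and then to extend $T$ diagonally along this identification.

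First I fix $n \in \mathbb{S}$ with $mn = -nm$. From $J(x_{+}\cdot n) = J(x_{+})\cdot n = x_{+}mn = -(x_{+}n)\cdot m$ for $x_{+} \in \mathcal{H}^{Jm}_{+}$, right multiplication by $n$ is an $\mathbb{R}$-isometric bijection $\mathcal{H}^{Jm}_{+} \to \mathcal{H}^{Jm}_{-}$, so every $v \in \mathcal{H}$ admits a unique decomposition $v = v_{+} + w_{+}\cdot n$ with $v_{+}, w_{+} \in \mathcal{H}^{Jm}_{+}$, and $\|v\|^{2} = \|v_{+}\|^{2} + \|w_{+}\|^{2}$ (the cross terms cancel by the identity recalled in Remark \ref{J}). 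I then set
\[
\mathcal{D}(\widetilde{T}) := \{v_{+} + w_{+}\cdot n : v_{+}, w_{+} \in \mathcal{D}(T)\}, \qquad \widetilde{T}(v_{+} + w_{+}\cdot n) := T(v_{+}) + T(w_{+})\cdot n.
\]

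The main obstacle is verifying right $\mathbb{H}$-linearity. Since $\widetilde{T}$ is manifestly $\mathbb{R}$-linear and commutes with right multiplication by $n$, and $\{1, m, n, mn\}$ is an $\mathbb{R}$-basis of $\mathbb{H}$, it suffices to check $\widetilde{T}(v\cdot m) = \widetilde{T}(v)\cdot m$. Using $nm = -mn$ one rewrites $(v_{+} + w_{+}n)\cdot m = v_{+}m + (-w_{+}m)\cdot n$; the $\mathbb{C}_{m}$-linearity of $T$ (together with $\mathcal{D}(T)$ being a $\mathbb{C}_{m}$-subspace) closes the computation. The remaining items of the first half then fall out: $\mathcal{D}(\widetilde{T})$ is $J$-invariant and $J\widetilde{T} = \widetilde{T}J$ because $J$ acts as right multiplication by $m$ on $\mathcal{H}^{Jm}_{+}$; boundedness with $\|\widetilde{T}\| = \|T\|$ is immediate from $\|\widetilde{T}v\|^{2} = \|Tv_{+}\|^{2} + \|Tw_{+}\|^{2}$.

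For uniqueness, the projections $P_{\pm}v = \tfrac{1}{2}(v \mp J(v)\cdot m)$ force any $J$-invariant right-linear extension $\widetilde{T}'$ to satisfy $P_{+}v \in \mathcal{D}(T)$ and $P_{-}v \cdot (-n) \in \mathcal{D}(T)$ whenever $v \in \mathcal{D}(\widetilde{T}')$, pinning down both the domain and the values. The converse direction mirrors this: given $V$ with $JV = VJ$ and $J$-stable domain, the restriction $U := V|_{\mathcal{D}(V)\cap \mathcal{H}^{Jm}_{+}}$ maps into $\mathcal{H}^{Jm}_{+}$ (since $J(Vx_{+}) = V(Jx_{+}) = V(x_{+}m) = (Vx_{+})\cdot m$) and is $\mathbb{C}_{m}$-linear, and one verifies $\widetilde{U} = V$ by the same projection argument. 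Density of $\mathcal{D}(\widetilde{T})$ in $\mathcal{H}$ follows from density of $\mathcal{D}(T)$ in $\mathcal{H}^{Jm}_{+}$; composition and inversion are immediate from the defining formula. For the adjoint identity $(\widetilde{T})^{*} = \widetilde{T^{*}}$, I would use the characterisation of the adjoint via continuity of $x \mapsto \langle y | \widetilde{T}x\rangle$ to deduce $J$-invariance of $\mathcal{D}(\widetilde{T}^{*})$ and $J\widetilde{T}^{*} = \widetilde{T}^{*}J$, apply the converse to write $\widetilde{T}^{*} = \widetilde{W}$, and identify $W = T^{*}$ via $\langle y_{+} | Tx_{+}\rangle = \langle Wy_{+} | x_{+}\rangle$ on $\mathcal{H}^{Jm}_{+}$.
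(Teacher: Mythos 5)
Your construction is correct and is essentially the argument the paper uses: the paper itself only cites this proposition from Ghiloni et al., but its proof of the generalization (Theorem \ref{extension1}) rests on exactly the same decomposition $\mathcal{H}=\mathcal{H}^{Jm}_{+}\oplus\Phi(\mathcal{H}^{Jm}_{+})$ with $\Phi(x)=x\cdot n$, and its formula $\widetilde{T}(x)=T(x_{1})-T(x_{2}\cdot n)\cdot n$ coincides with your $T(v_{+})+T(w_{+})\cdot n$ upon writing $x_{2}=w_{+}\cdot n$. Your uniqueness argument via the projections $P_{\pm}$, the converse via $J$-commutation, and the adjoint/composition/inverse items all match the paper's treatment.
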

\begin{remark}\label{extensionrmk}
	In particular, if $T \in \mathcal{B}(\mathcal{H})$ is  normal, then there exist an anti self-adjoint and unitary $J \in \mathcal{B}(\mathcal{H})$ such that $TJ = JT$  (see \cite[Theorem 5.9]{Ghiloni} for details).  Thus Proposition \ref{extension} holds true for quaternionic normal operators. 	
\end{remark}
In case of unbounded operators, the existence of commuting $J$ is given by \cite[Theorem 7.4]{Ghiloni2}. We give a proof for the same via bounded transform, which is different from the existing proofs in the literature.
\begin{theorem}\cite[Theorem 6.1]{Alpay}\label{Ztransform}
	Let $T \in \mathcal{C}(\mathcal{H})$ and define $\mathcal{Z}_{T}:= T(I+T^{*}T)^{-\frac{1}{2}}$. Then $\mathcal{Z}_{T}$ has the following properties:
	\begin{enumerate}
		\item $\mathcal{Z}_{T} \in \mathcal{B}(\mathcal{H}),\; \|\mathcal{Z}_{T}\| \leq 1$ and ${T}= \mathcal{Z}_{T} (I-\mathcal{Z}_{T}^{*}\mathcal{Z}_{T})^{-\frac{1}{2}}$
		\item	$\big(\mathcal{Z}_{T}\big)^{*} = \mathcal{Z}_{T^{*}}$
		\item If $T$ is normal, then $\mathcal{Z}_{T}$ is normal.
	\end{enumerate}
\end{theorem}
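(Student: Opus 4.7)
The plan is to build $\mathcal{Z}_{T}$ from the functional calculus of the positive self-adjoint operator $I+T^{*}T$. First I would establish the quaternionic analog of von Neumann's theorem: for $T\in\mathcal{C}(\mathcal{H})$, the operator $T^{*}T$ is densely defined, self-adjoint, and positive. Hence $I+T^{*}T\geq I$ is a bijection of $\mathcal{D}(T^{*}T)$ onto $\mathcal{H}$, its inverse $(I+T^{*}T)^{-1}$ lies in $\mathcal{B}(\mathcal{H})$, is positive, and has norm at most one. Setting $A:=(I+T^{*}T)^{-1/2}$ as the positive square root supplied by the bounded functional calculus, one has $A\in\mathcal{B}(\mathcal{H})$, $\|A\|\leq 1$, and $\mathrm{ran}(A)\subseteq\mathcal{D}((I+T^{*}T)^{1/2})=\mathcal{D}(T)$, the last equality from the identity $\|(I+T^{*}T)^{1/2}x\|^{2}=\|x\|^{2}+\|Tx\|^{2}$.

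For item (1), since $\mathrm{ran}(A)\subseteq\mathcal{D}(T)$ the composition $\mathcal{Z}_{T}=TA$ is defined on all of $\mathcal{H}$, and
\[
\|TAx\|^{2}\leq\|Ax\|^{2}+\|TAx\|^{2}=\|(I+T^{*}T)^{1/2}Ax\|^{2}=\|x\|^{2}
\]
gives $\|\mathcal{Z}_{T}\|\leq 1$. A direct computation yields $\mathcal{Z}_{T}^{*}\mathcal{Z}_{T}=AT^{*}TA=A\bigl((I+T^{*}T)-I\bigr)A=I-A^{2}$, hence $I-\mathcal{Z}_{T}^{*}\mathcal{Z}_{T}=(I+T^{*}T)^{-1}$, $(I-\mathcal{Z}_{T}^{*}\mathcal{Z}_{T})^{-1/2}=(I+T^{*}T)^{1/2}$, and therefore $\mathcal{Z}_{T}(I-\mathcal{Z}_{T}^{*}\mathcal{Z}_{T})^{-1/2}=TA(I+T^{*}T)^{1/2}=T$ on $\mathcal{D}(T)$.

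For item (2), the key step is the intertwining $AT^{*}\subseteq T^{*}(I+TT^{*})^{-1/2}$. I would first verify the algebraic identity $T(I+T^{*}T)\subseteq(I+TT^{*})T$, pass to resolvents to obtain $T(I+T^{*}T)^{-1}=(I+TT^{*})^{-1}T$, and extend to the square roots via the integral representation $\lambda^{-1/2}=\pi^{-1}\int_{0}^{\infty}(\lambda+t)^{-1}t^{-1/2}\,dt$ applied spectrally. Given this, for $x\in\mathcal{H}$ and $y\in\mathcal{D}(T^{*})$ one computes
\[
\langle\mathcal{Z}_{T}x\,|\,y\rangle=\langle Ax\,|\,T^{*}y\rangle=\langle x\,|\,AT^{*}y\rangle=\langle x\,|\,T^{*}(I+TT^{*})^{-1/2}y\rangle=\langle x\,|\,\mathcal{Z}_{T^{*}}y\rangle,
\]
and density of $\mathcal{D}(T^{*})$ yields $\mathcal{Z}_{T}^{*}=\mathcal{Z}_{T^{*}}$.

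For item (3), if $T$ is normal then $T^{*}T=TT^{*}$, so $(I+T^{*}T)^{-1/2}=(I+TT^{*})^{-1/2}=A$ and the intertwinings from the previous paragraph collapse to genuine commutation $AT\subseteq TA$ and $AT^{*}\subseteq T^{*}A$. Then
\[
\mathcal{Z}_{T}\mathcal{Z}_{T}^{*}=TA\cdot AT^{*}=TA^{2}T^{*}=A^{2}TT^{*}=A^{2}T^{*}T=AT^{*}\cdot TA=\mathcal{Z}_{T}^{*}\mathcal{Z}_{T},
\]
showing that $\mathcal{Z}_{T}$ is normal. The principal obstacle throughout is making sense of the functional calculus for the unbounded positive self-adjoint operator $I+T^{*}T$ in the quaternionic setting, and tracking domains through the square-root intertwining; I would handle both by lifting the classical complex proof from a $\mathbb{C}_{m}$-slice via the extension machinery of Proposition \ref{extension} combined with the continuous slice functional calculus of \cite{Ghiloni}.
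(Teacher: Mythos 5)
This theorem is not proved in the paper at all: it is quoted with the citation \cite[Theorem 6.1]{Alpay}, so there is no internal proof to compare your attempt against. Your outline is, in substance, the standard bounded-transform argument that appears in that reference (and goes back to the complex-case proof of von Neumann/Kato), so the approach is the right one; and your closing remark --- lift the complex proof from a slice $\mathcal{H}^{Jm}_{+}$ via the extension machinery --- is exactly the philosophy the paper itself uses elsewhere (compare Lemma \ref{Ztransformextension}, which shows $\widetilde{\mathcal{Z}}_{T}=\mathcal{Z}_{\widetilde{T}}$ precisely so that such facts transfer between the slice and the full quaternionic space).

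Two places in your sketch need more care than you give them. First, the equality $\mathcal{D}\bigl((I+T^{*}T)^{1/2}\bigr)=\mathcal{D}(T)$ does not follow immediately from the identity $\|(I+T^{*}T)^{1/2}x\|^{2}=\|x\|^{2}+\|Tx\|^{2}$, because that identity is available a priori only on $\mathcal{D}(T^{*}T)$; you must argue that $\mathcal{D}(T^{*}T)$ is a core for both $T$ and $(I+T^{*}T)^{1/2}$ and then pass to closures. Second, the computation $\mathcal{Z}_{T}^{*}\mathcal{Z}_{T}=AT^{*}TA=I-A^{2}$ is written as if $T^{*}TA$ were everywhere defined, but $\mathrm{ran}(A)=\mathcal{D}(T)$, not $\mathcal{D}(T^{*}T)$; the identity should be verified on the dense subspace $\mathrm{ran}(A^{2})=\mathcal{D}(T^{*}T)$ (where $T^{*}TA^{2}=I-A^{2}$ holds literally) and then extended by boundedness, and the analogous remark applies to the chain of equalities proving item (3). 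These are standard repairs and do not change the architecture of the proof, but as written those steps are gaps.
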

\begin{theorem}\label{unboundedcartesian}
	Let $T \in \mathcal{C}(\mathcal{H})$ be normal. Then there exists an anti self-adjoint and unitary operator $J \in \mathcal{B}(\mathcal{H})$ such that $J$ commutes with $T$, that is $JT \subseteq TJ$.
\end{theorem}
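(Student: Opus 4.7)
My plan is to reduce the unbounded case to the bounded one via the bounded transform of Theorem \ref{Ztransform}. Since $T$ is normal, part (3) of that theorem yields that $\mathcal{Z}_{T} \in \mathcal{B}(\mathcal{H})$ is normal with $\|\mathcal{Z}_{T}\| \le 1$. Applying Remark \ref{extensionrmk} to the bounded normal operator $\mathcal{Z}_{T}$ produces an anti self-adjoint and unitary $J \in \mathcal{B}(\mathcal{H})$ satisfying $J\mathcal{Z}_{T} = \mathcal{Z}_{T} J$; taking adjoints and using $J^{*} = -J$ also yields $J\mathcal{Z}_{T}^{*} = \mathcal{Z}_{T}^{*} J$, and hence $J$ commutes with the bounded positive operator $\mathcal{Z}_{T}^{*}\mathcal{Z}_{T}$. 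I will claim that this very $J$ is the required operator for $T$ itself.

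To transfer the commutation from $\mathcal{Z}_{T}$ to $T$, I would use the identity $\mathcal{Z}_{T}^{*}\mathcal{Z}_{T} = I - (I+T^{*}T)^{-1}$, obtained by a direct computation from $\mathcal{Z}_{T} = T(I+T^{*}T)^{-1/2}$. Setting $B := (I+T^{*}T)^{-1/2} = (I - \mathcal{Z}_{T}^{*}\mathcal{Z}_{T})^{1/2}$, commutation of $J$ with $B^{2}$ upgrades to commutation with its unique positive square root $B$ via continuous (in fact polynomial) functional calculus for bounded positive operators, which is available in the $\mathbb{C}_{m}$-Hilbert space setting. Since $B$ is bounded, positive and injective with dense range $R(B) = \mathcal{D}(B^{-1})$, the bounded commutation $JB = BJ$ lifts to an inclusion at the level of the unbounded inverse: for every $x = By \in R(B)$ one gets $Jx = BJy \in R(B)$ with $B^{-1}Jx = Jy = JB^{-1}x$, so $J(\mathcal{D}(B^{-1})) \subseteq \mathcal{D}(B^{-1})$ and $JB^{-1} \subseteq B^{-1}J$.

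Finally, the factorization $T = \mathcal{Z}_{T}(I - \mathcal{Z}_{T}^{*}\mathcal{Z}_{T})^{-1/2} = \mathcal{Z}_{T} B^{-1}$ of Theorem \ref{Ztransform}(1), together with the boundedness of $\mathcal{Z}_{T}$ (which forces $\mathcal{D}(T) = \mathcal{D}(B^{-1})$), gives $J(\mathcal{D}(T)) \subseteq \mathcal{D}(T)$ and, for every $x \in \mathcal{D}(T)$,
\[
TJx = \mathcal{Z}_{T} B^{-1} Jx = \mathcal{Z}_{T} J B^{-1} x = J \mathcal{Z}_{T} B^{-1} x = JTx,
\]
which is exactly $JT \subseteq TJ$.

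The main obstacle I foresee is the domain bookkeeping in the second paragraph, namely verifying that the bounded commutation $JB = BJ$ propagates to the inclusion $JB^{-1} \subseteq B^{-1}J$ for the unbounded inverse and that $\mathcal{D}(T)$ is consequently $J$-invariant. The functional-calculus step extracting $B$ from $B^{2}$ in the quaternionic setting is the other subtle point, but it should go through either by polynomial approximation of $\sqrt{\cdot}$ on $[0,\|B^{2}\|]$ or by decomposing $\mathcal{H} = \mathcal{H}^{Jm}_{+}\oplus \mathcal{H}^{Jm}_{-}$ and invoking the classical spectral theorem on each $\mathbb{C}_{m}$-slice.
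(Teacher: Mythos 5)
Your proposal is correct and follows essentially the same route as the paper: pass to the bounded transform $\mathcal{Z}_{T}$, obtain the commuting anti self-adjoint unitary $J$ from the bounded normal case, propagate the commutation through $I-\mathcal{Z}_{T}^{*}\mathcal{Z}_{T}$, its positive square root, and then its unbounded inverse via the same range argument, and conclude using $T=\mathcal{Z}_{T}(I-\mathcal{Z}_{T}^{*}\mathcal{Z}_{T})^{-1/2}$. The domain bookkeeping you flag as the main obstacle is exactly the step the paper carries out explicitly, and your sketch of it is sound.
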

\begin{proof}
	It is clear from the Theorem \ref{Ztransform},  that $\mathcal{Z}_{T}$ is a bounded right linear normal operator. By Proposition \cite[Theorem 5.9]{Ghiloni}, there exists an anti self-adjoint and unitary operator $J\in \mathcal{B}(\mathcal{H})$ such that
	$\mathcal{Z}_{T} J = J \mathcal{Z}_{T} \; \text{and} \; J\mathcal{Z}_{T}^{*}= \mathcal{Z}_{T}^{*}J$.
	This implies that $J(I- \mathcal{Z}_{T}^{*} \mathcal{Z}_{T}) = (I- \mathcal{Z}_{T}^{*} \mathcal{Z}_{T})J$. So $J$  commutes with the square root of bounded positive operator $I-\mathcal{Z}_{T}^{*}\mathcal{Z}_{T}$, that is $J (I-\mathcal{Z}_{T}^{*}\mathcal{Z}_{T})^{\frac{1}{2}} = (I-\mathcal{Z}_{T}^{*}\mathcal{Z}_{T})^{\frac{1}{2}} J$.
	
	Now we show that $J$ commutes with the inverse of $(I-\mathcal{Z}_{T}^{*}\mathcal{Z}_{T})^{\frac{1}{2}}$, which is an unbounded operator. Let $x \in \mathcal{D}((I-\mathcal{Z}_{T}^{*}\mathcal{Z}_{T})^{-\frac{1}{2}}) = R((I-\mathcal{Z}_{T}^{*}\mathcal{Z}_{T})^{\frac{1}{2}})$. Then $x = (I-\mathcal{Z}_{T}^{*}\mathcal{Z}_{T})^{\frac{1}{2}}y$, for some $y \in \mathcal{D}((I-\mathcal{Z}_{T}^{*}\mathcal{Z}_{T})^{\frac{1}{2}})$ and $Jx = J(I-\mathcal{Z}_{T}^{*}\mathcal{Z}_{T})^{\frac{1}{2}}y = (I-\mathcal{Z}_{T}^{*}\mathcal{Z}_{T})^{\frac{1}{2}}Jy \in R((I-\mathcal{Z}_{T}^{*}\mathcal{Z}_{T})^{\frac{1}{2}})$. This implies $Jx \in \mathcal{D}((I-\mathcal{Z}_{T}^{*}\mathcal{Z}_{T})^{-\frac{1}{2}})$. Moreover,
	\begin{equation*}
	J (I-\mathcal{Z}_{T}^{*}\mathcal{Z}_{T})^{-\frac{1}{2}}x= Jy = (I-\mathcal{Z}_{T}^{*}\mathcal{Z}_{T})^{-\frac{1}{2}}Jx.		
	\end{equation*}
	It is enough to show that $JT \subseteq TJ$. Since $T = \mathcal{Z}_{T}(I-\mathcal{Z}_{T}^{*}\mathcal{Z}_{T})^{-\frac{1}{2}}$ and $\mathcal{D}(T) = \mathcal{D}((I-\mathcal{Z}_{T}^{*}\mathcal{Z}_{T})^{-\frac{1}{2}})$ we see that $Jx \in \mathcal{D}(T)$,for every $x \in \mathcal{D}(T)$. Furthermore,
	\begin{align*}
	JTx = J\mathcal{Z}_{T}(I-\mathcal{Z}_{T}^{*}\mathcal{Z}_{T})^{-\frac{1}{2}}x
	&=\mathcal{Z}_{T} J (I-\mathcal{Z}_{T}^{*}\mathcal{Z}_{T})^{-\frac{1}{2}}x\\
	&= \mathcal{Z}_{T} (I-\mathcal{Z}_{T}^{*}\mathcal{Z}_{T})^{-\frac{1}{2}}Jx\\
	&=TJx.
	\end{align*}
	Hence the result.
\end{proof}
\begin{lemma}\label{Ztransformextension}
	Let  $J \in \mathcal{B}(\mathcal{H})$ be an anti self-adjoint and unitary. If $T \colon \mathcal{D}(T)\subset \mathcal{H}^{Jm}_{+} \to \mathcal{H}^{Jm}_{+}$ is $\mathbb{C}_{m}$- linear, then $\widetilde{\mathcal{Z}}_{T}= \mathcal{Z}_{\widetilde{T}}$.
\end{lemma}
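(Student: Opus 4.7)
The plan is to factor $\mathcal{Z}_T = T(I+T^*T)^{-1/2}$ and distribute the tilde across each piece using the compatibilities of extension with product, inverse, and adjoint from Proposition \ref{extension}. By multiplicativity (item (\ref{extnmulti})), $\widetilde{\mathcal{Z}}_T = \widetilde{T} \cdot \widetilde{(I+T^*T)^{-1/2}}$; by the inverse property (item (\ref{extninverse})), $\widetilde{(I+T^*T)^{-1/2}} = \big[\widetilde{(I+T^*T)^{1/2}}\big]^{-1}$; and combining multiplicativity with the adjoint formula $\widetilde{T^*} = (\widetilde{T})^*$ yields $\widetilde{I + T^*T} = I + \widetilde{T}^*\widetilde{T}$. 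The entire claim therefore reduces to the single identity
\[
\widetilde{(I+T^*T)^{1/2}} \;=\; \big(I + \widetilde{T}^*\widetilde{T}\big)^{1/2}.
\]

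Set $B := (I+T^*T)^{1/2}$, a positive self-adjoint $\mathbb{C}_m$-linear operator on $\mathcal{H}^{Jm}_+$. Multiplicativity of extension gives $(\widetilde{B})^2 = \widetilde{B^2} = I + \widetilde{T}^*\widetilde{T}$, and $B^* = B$ together with the adjoint compatibility makes $\widetilde{B}$ self-adjoint; it therefore remains to verify $\widetilde{B} \geq 0$ on $\mathcal{H}$, for then uniqueness of the positive square root finishes the proof. Fix $n \in \mathbb{S}$ with $mn = -nm$. Using $J$-invariance of $\mathcal{D}(\widetilde{B})$, any $x \in \mathcal{D}(\widetilde{B})$ decomposes uniquely as $x = x_+ + y\cdot n$ with $x_+, y \in \mathcal{D}(B)$, and right $\mathbb{H}$-linearity of $\widetilde{B}$ together with $\widetilde{B}|_{\mathcal{H}^{Jm}_+} = B$ gives $\widetilde{B} x = B x_+ + (By)\cdot n$. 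The two diagonal contributions in $\langle x | \widetilde{B} x\rangle$ are $\langle x_+ | B x_+\rangle$ and $\bar n \langle y | By\rangle n$, both of which are non-negative reals (using $\bar n n = 1$ and positivity of $B$ on $\mathcal{H}^{Jm}_+$, which makes $\langle y | By\rangle$ a non-negative real). Setting $\alpha := \langle x_+ | By\rangle$, which lies in $\mathbb{C}_m$ because the restricted inner product on $\mathcal{H}^{Jm}_+$ is $\mathbb{C}_m$-valued, and using $B^* = B$ to rewrite $\langle y | B x_+\rangle = \bar\alpha$, the two cross terms combine to $\alpha n + \bar n \bar\alpha = \alpha n - n\bar\alpha$; the anti-commutation $mn = -nm$ forces $\alpha n = n\bar\alpha$ for every $\alpha \in \mathbb{C}_m$, so the cross terms vanish and $\langle x | \widetilde{B} x\rangle \geq 0$.

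The main obstacle is this cross-term cancellation: it depends crucially on the $\mathbb{C}_m$-valuedness of the restricted inner product on $\mathcal{H}^{Jm}_+$ and on the anti-commutation of $n$ with all of $\mathbb{C}_m$. Once $\widetilde{B}$ is identified with $(I+\widetilde{T}^*\widetilde{T})^{1/2}$, inverting and multiplying by $\widetilde{T}$ yields $\widetilde{\mathcal{Z}}_T = \widetilde{T}\,(I+\widetilde{T}^*\widetilde{T})^{-1/2} = \mathcal{Z}_{\widetilde{T}}$, which is the desired equality.
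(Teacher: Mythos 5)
Your proposal is correct and follows the same overall route as the paper: factor $\mathcal{Z}_T=T(I+T^*T)^{-1/2}$, push the tilde through each factor via Proposition \ref{extension}, and reduce everything to the identity $\widetilde{(I+T^*T)}=I+\widetilde{T}^*\widetilde{T}$, which both you and the paper verify by the same direct computation on the decomposition $x=x_1+x_2\in\mathcal{D}(T^*T)\oplus\Phi(\mathcal{D}(T^*T))$. The genuine difference is that you also prove the step the paper leaves implicit, namely that extension commutes with the positive square root, i.e.\ $\widetilde{(I+T^*T)^{1/2}}=\bigl(I+\widetilde{T}^*\widetilde{T}\bigr)^{1/2}$: Proposition \ref{extension} gives compatibility with products, inverses and adjoints, but says nothing about square roots, so the paper's first display is not fully justified as written. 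Your argument — $\widetilde{B}$ is self-adjoint with $(\widetilde{B})^2=I+\widetilde{T}^*\widetilde{T}$, and $\widetilde{B}\geq 0$ because in $\langle x\,|\,\widetilde{B}x\rangle$ the diagonal terms $\langle x_+|Bx_+\rangle$ and $\bar n\langle y|By\rangle n$ are non-negative reals while the cross terms $\alpha n-n\bar\alpha$ vanish for $\alpha\in\mathbb{C}_m$ by the anti-commutation $mn=-nm$, so uniqueness of the positive square root applies — is correct and is exactly the kind of supplement the paper's proof needs. (An alternative closing of the same gap is to note that $J$ commutes with $I+\widetilde{T}^*\widetilde{T}$ and hence with its square root, and then invoke the converse direction of Proposition \ref{extension}; your positivity computation is more self-contained.)
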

\begin{proof} By Proposition \ref{extension}, we have
	\begin{align*}
	\widetilde{\mathcal{Z}}_{T} &= \widetilde{T} {(\widetilde{I}_{\mathcal{H}^{Jm}_{+}}+\widetilde{T}^{*}\widetilde{T})}^{-\frac{1}{2}}\\
	&= \widetilde{T}{(I_{\mathcal{H}}+ \widetilde{T}^{*}\widetilde{T})}^{-\frac{1}{2}}\\
	&= \mathcal{Z}_{\widetilde{T}}.
	\end{align*}
	It is enough to show that $\widetilde{(I_{\mathcal{H}^{Jm}_{+}}+ T^{*}T)} = (I_{\mathcal{H}} + \widetilde{T}^{*}\widetilde{T})$.
	
	Let $x \in \mathcal{D}(\widetilde{I_{\mathcal{H}^{Jm}_{+}}+ T^{*}T})$. Then $x = x_{1}+x_{2}$, where $x_{1} \in \mathcal{D}(I_{\mathcal{H}^{Jm}_{+}}+ T^{*}T)= \mathcal{D}(T^{*}T) $ and $x_{2} \in \Phi(\mathcal{D}(I_{\mathcal{H}^{Jm}_{+}}+ T^{*}T))= \Phi(\mathcal{D}(T^{*}T))$, we have
	\begin{align*}
	\widetilde{(I_{\mathcal{H}^{Jm}_{+}}+ T^{*}T)}(x) &= {(I_{\mathcal{H}^{Jm}_{+}}+ T^{*}T)}(x_{1}) - {(I_{\mathcal{H}^{Jm}_{+}}+ T^{*}T)}(x_{2} \cdot n) \cdot n \\
	&= (x_{1}+x_{2}) + \widetilde{T^{*}T}(x_{1}+x_{2})\\
	&=(I_{\mathcal{H}}+\widetilde{T}^{*}\widetilde{T} )(x). \qedhere
	\end{align*}
\end{proof}

Next we prove that a linear operator on any $\mathbb{C}_{m}$- Hilbert space $K$ can be extended uniquely to a quaternionic linear operator on some quaternionic Hilbert space $\mathcal{H}$, which is associated to $K$. It is enough to prove that $K = \mathcal{H}^{Jm}_{+}$, for some anti self-adjoint and unitary $J\in \mathcal{B}(\mathcal{H})$, then the result follows from Proposition \ref{extension}

\begin{lemma}\label{separable}
	Let $m, n \in \mathbb{S}$ with $mn = -nm$ and $J \in \mathcal{B}(\mathcal{H})$ be anti self-adjoint and unitary. Then $\mathcal{H}^{Jm}_{+}$ is separable if and only if $\mathcal{H}$ is separable.
\end{lemma}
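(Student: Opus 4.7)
My plan is to pass through the characterization \emph{separable $\iff$ admits a finite or countable Hilbert basis}, which holds both for quaternionic Hilbert spaces and, by the Note following the slice decomposition, for $\mathbb{C}_m$-Hilbert spaces via the analogue of \cite[Proposition 2.6]{Ghiloni}. Once this is in hand, the whole statement becomes a matter of comparing the cardinality of a Hilbert basis of $\mathcal{H}^{Jm}_{+}$ with that of a Hilbert basis of $\mathcal{H}$, and Remark \ref{J} is tailor-made to do exactly that.

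For the forward implication, I would take a countable Hilbert basis $\mathcal{N}$ of the $\mathbb{C}_{m}$-Hilbert space $\mathcal{H}^{Jm}_{+}$ and invoke Remark \ref{J} (which records \cite[Proposition 3.8(f)]{Ghiloni}): such an $\mathcal{N}$ is automatically a Hilbert basis of the full quaternionic Hilbert space $\mathcal{H}$, so $\mathcal{H}$ is separable. For the reverse implication, let $\mathcal{N}$ be \emph{any} Hilbert basis of $\mathcal{H}^{Jm}_{+}$ (guaranteed to exist by the cited result); again by Remark \ref{J} it is also a Hilbert basis of $\mathcal{H}$. Since $\mathcal{H}$ is separable it possesses at least one countable Hilbert basis, and the standard Parseval argument shows that any two Hilbert bases of a fixed Hilbert space have the same cardinality. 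Hence $\mathcal{N}$ is itself at most countable and $\mathcal{H}^{Jm}_{+}$ is separable.

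The role of the element $n \in \mathbb{S}$ with $mn=-nm$ is implicit but essential in the background: it witnesses the $\mathbb{R}$-linear bijection $x\mapsto x\cdot n$ from $\mathcal{H}^{Jm}_{+}$ onto $\mathcal{H}^{Jm}_{-}$, which is precisely what makes the direct sum $\mathcal{H}=\mathcal{H}^{Jm}_{+}\oplus\mathcal{H}^{Jm}_{-}$ ``balanced'' enough that a single Hilbert basis of $\mathcal{H}^{Jm}_{+}$ already generates $\mathcal{H}$ as a right $\mathbb{H}$-module. The point I expect to be the only delicate step is the invariance of Hilbert-basis cardinality when the scalar ring changes from $\mathbb{C}_{m}$ (on $\mathcal{H}^{Jm}_{+}$) to $\mathbb{H}$ (on $\mathcal{H}$); if I wanted to sidestep that, I would instead argue topologically, using the continuous orthogonal projection $P_{+}:\mathcal{H}\to\mathcal{H}^{Jm}_{+}$ together with its adjoint embedding: a countable dense subset $D\subset\mathcal{H}$ gives $P_{+}(D)$ countable and dense in $\mathcal{H}^{Jm}_{+}$, and conversely a countable dense $D_{+}\subset\mathcal{H}^{Jm}_{+}$ produces a countable dense subset $D_{+}\cup(D_{+}\cdot n)$ of $\mathcal{H}$ after closing under right scalar multiplication by a countable dense subset of $\mathbb{H}$.
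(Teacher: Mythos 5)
Your primary argument (via Hilbert bases) is correct but takes a genuinely different route from the paper. The paper works entirely with explicit countable dense subsets: for the forward direction it takes a countable dense $D_{+}\subset\mathcal{H}^{Jm}_{+}$ and shows that the \emph{sum set} $D=\{a+b\cdot n:\,a,b\in D_{+}\}$ is dense in $\mathcal{H}$, using the decomposition $x=a+b\cdot n$ and the triangle inequality; for the converse it introduces the bounded idempotent $P_{+}(x)=\tfrac{1}{2}(x-Jxm)$ with range $\mathcal{H}^{Jm}_{+}$ and checks by a norm estimate that $P_{+}(D)$ is dense in $\mathcal{H}^{Jm}_{+}$. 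Your route instead reduces everything to the cardinality of a single Hilbert basis $\mathcal{N}$ of $\mathcal{H}^{Jm}_{+}$, using Remark \ref{J} to view $\mathcal{N}$ simultaneously as a Hilbert basis of $\mathcal{H}$; this is shorter and computation-free, at the price of invoking the equivalence ``separable $\iff$ countable Hilbert basis'' in both the $\mathbb{C}_{m}$- and $\mathbb{H}$-settings. That equivalence does hold (rational-quaternion combinations give density one way; the $\sqrt{2}$-separation of an orthonormal set gives countability the other way), and for the reverse implication you do not even need equicardinality of all Hilbert bases --- only that every orthonormal set in a separable space is countable. The paper's proof is more elementary and self-contained; yours leans harder on the cited structure results but is cleaner.

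One caution about your fallback topological sketch: $D_{+}\cup(D_{+}\cdot n)$ is \emph{not} dense in $\mathcal{H}$, even after closing under right multiplication by a countable dense subring of $\mathbb{H}$, since that only produces elements of the form $a\cdot q$ and misses generic sums $a+b\cdot n$ with $a,b$ independent. You must take the sum set $\{a+b\cdot n:\,a,b\in D_{+}\}$, which is exactly what the paper does. The $P_{+}(D)$ half of your sketch coincides with the paper's argument and is fine.
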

\begin{proof}
	Suppose that $\mathcal{H}^{Jm}_{+}$ is separable. Let $D_{+}$ be countable dense subset of $\mathcal{H}^{Jm}_{+}$. Define
	\begin{equation*}
	D := \big\{a + b \cdot n :\; a,b \in D_{+}\big\}.
	\end{equation*}
	If $x \in \mathcal{H}$, then $x = a + b \cdot n$, for some $a,b \in \mathcal{H}^{Jm}_{+}$. Since $D_{+}$ is dense in $\mathcal{H}^{Jm}_{+}$, there exist $(a_{\ell}), (b_{\ell})$ in $D_{+}$ such that
	\begin{equation*}
	\|a_{\ell}-a\| \longrightarrow 0 \; \text{and}\; \|b_{\ell}-b\| \longrightarrow 0,\; \text{as}\; \ell \to \infty.
	\end{equation*}
	This implies that $(a_{\ell} + b_{\ell} \cdot n) \subset D$ and
	\begin{equation*}
	\|(a_{\ell}+b_{\ell} \cdot n) - (a+b\cdot n)\| \leq  \|a_{\ell}-a\| + \|b_{\ell}-b\| \longrightarrow 0, \; \text{as} \; \ell \to \infty.
	\end{equation*}
	Therefore $\mathcal{H}$ is separable.
	
	Assume that $\mathcal{H}$ is separable. Let $D \subset \mathcal{H}$ be a countable dense set. If $x \in \mathcal{H}$, then there exist $(x_{\ell}) \subseteq D$ such that
	\begin{equation*}
	\|x_{\ell} - x\| \longrightarrow 0, \; \text{as} \; \ell \to \infty.
	\end{equation*}
	Define $P_{+} \colon \mathcal{H} \to \mathcal{H}$ by
	\begin{equation*}
	P_{+}(x) = \frac{1}{2}(x-Jxm), \; \text{for all}\; x \in \mathcal{H}.
	\end{equation*}
	Clearly, $P_{+}^{2} = P_{+}$ and $R(P_{+})= \mathcal{H}^{Jm}_{+}$. 	Let $D_{+} : = \Big\{\frac{1}{2}(x-Jxm):\; x \in D\Big\}$. Then $D_{+}$ is countable subset of $\mathcal{H}^{Jm}_{+}$. It is enough to show $D_{+}$ is dense in $\mathcal{H}^{Jm}_{+}$. If $y \in \mathcal{H}^{Jm}_{+}$, then there exist some $x \in \mathcal{H}$ such that $y = P_{+}(x)= \frac{1}{2}(x - Jxm)$. This implies that
	\begin{align*}
	\Big\|\frac{1}{2}\big(x_{\ell} - Jx_{\ell}m\big) - \frac{1}{2}\big(x-Jxm\big)\Big\| &= \frac{1}{2} \Big\|(x_{\ell}-x) - \big(Jx_{\ell}m -Jxm\big)\Big\|\\
	&\leq \frac{1}{2} \big\|x_{\ell}-x\big\| + \frac{1}{2}\big\|J(x_{\ell}-x)m\big\|\\
	&= \big\|x_{\ell}-x\big\| \\
	& \longrightarrow 0, \; \text{as} \; \; \ell \to \infty.
	\end{align*}
	Hence $\mathcal{H}^{Jm}_{+}$ is separable.
\end{proof}

\begin{remark}
	Let $q \in \mathbb{H}$ and $T \colon \mathcal{D}(T) \subseteq \mathcal{H}^{Jm}_{+} \to \mathcal{H}^{Jm}_{+}$ be a $\mathbb{C}_{m}$- linear. Then by  Proposition \ref{extension}(\ref{extnmulti}), we have
	\begin{equation}\label{deltaextn}
	\Delta_{q}(\widetilde{T}) = \widetilde{\Delta}_{q}(T),
	\end{equation}
	where $\widetilde{\Delta}_{q}(T)$ denotes the extension of $\Delta_{q}(T)$ to $\mathcal{H}$.
\end{remark}
\begin{proposition}\label{slice}
	Let $m \in \mathbb{S}$. If $K$ is a $\mathbb{C}_{m}$ - Hilbert space, then $\mathcal{H} = K \times K$ can be given a quaternionic Hilbert space structure and  there exist an anti self-adjoint and unitary $J \in \mathcal{B}(\mathcal{H})$ such that
	\begin{equation*}
	K = \mathcal{H}^{Jm}_{+}.
	\end{equation*}
\end{proposition}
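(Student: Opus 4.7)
The plan is to equip $\mathcal{H}=K\times K$ with a right $\mathbb{H}$-action, a quaternionic inner product, and an anti self-adjoint unitary operator $J$ in such a way that $x\mapsto(x,0)$ identifies $K$ with $\mathcal{H}^{Jm}_{+}$. The construction is dictated by the real-linear decomposition $\mathbb{H}=\mathbb{C}_m\oplus\mathbb{C}_m\cdot n$, valid for any $n\in\mathbb{S}$ with $mn=-nm$, so that each $q\in\mathbb{H}$ has a unique expression $q=a+bn$ with $a,b\in\mathbb{C}_m$. Informally I will think of the pair $(x,y)$ as the symbol $x+y\cdot n$ and read off every operation by expanding formally in $\mathbb{H}$, after which I verify each axiom directly on $K\times K$.

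Concretely, for $q=a+bn\in\mathbb{H}$ I set
\[
(x,y)\cdot q:=(xa-y\bar b,\; xb+y\bar a),
\]
which is forced by $(x+yn)(a+bn)=(xa-y\bar b)+(xb+y\bar a)n$, using $na=\bar a n$ and $n^{2}=-1$. The right $\mathbb{H}$-module axioms then reduce to the one-line identity $(a_{1}+b_{1}n)(a_{2}+b_{2}n)=(a_{1}a_{2}-b_{1}\bar b_{2})+(a_{1}b_{2}+b_{1}\bar a_{2})n$ in $\mathbb{H}$. Next I define
\[
\langle(x_{1},y_{1})\,|\,(x_{2},y_{2})\rangle:=\bigl[\langle x_{1}|x_{2}\rangle_{K}+\overline{\langle y_{1}|y_{2}\rangle_{K}}\bigr]+\bigl[\langle x_{1}|y_{2}\rangle_{K}-\overline{\langle y_{1}|x_{2}\rangle_{K}}\bigr]n,
\]
obtained analogously by expanding $\langle x_{1}+y_{1}n\,|\,x_{2}+y_{2}n\rangle$ and collecting the $\mathbb{C}_m$ and $\mathbb{C}_m\cdot n$ components. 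Hermitian symmetry is checked using $\overline{a+bn}=\bar a-bn$; right linearity in the second slot is a direct expansion using the definition of the action; and $\langle(x,y)\,|\,(x,y)\rangle=\|x\|_{K}^{2}+\|y\|_{K}^{2}$ (the cross terms cancel by conjugate symmetry in $K$), which makes positive definiteness immediate and completeness of $\mathcal{H}$ inherited from that of $K$.

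Finally, I define $J(x,y):=(xm,ym)$, which corresponds to ``left multiplication by $m$'' in the $x+yn$ picture. Since $\mathbb{C}_m$ is commutative, $J$ is right $\mathbb{H}$-linear (because $xm\cdot a=xa\cdot m$ and $ym\cdot\bar a=y\bar a\cdot m$ for $a\in\mathbb{C}_m$), $J^{2}=-I$, and $J$ is isometric, hence unitary. Anti-self-adjointness follows from a direct computation using $\overline{cm}=-m\bar c$ for $c\in\mathbb{C}_m$; the $n$-parts of $\langle J\cdot|\cdot\rangle$ and $\langle\cdot|J\cdot\rangle$ cancel thanks to commutativity in $\mathbb{C}_m$. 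For the final identification, $(x,y)\cdot m=(xm,-ym)$ whereas $J(x,y)=(xm,ym)$, so $J(x,y)=(x,y)\cdot m$ forces $y=0$; hence $\mathcal{H}^{Jm}_{+}=K\times\{0\}$, canonically identified with $K$ as a $\mathbb{C}_m$-Hilbert space. The main obstacle is purely bookkeeping: carefully tracking the rules $nc=\bar c n$ and $\overline{cm}=-m\bar c$ for $c\in\mathbb{C}_m$ throughout the verification of the inner-product axioms and of anti-self-adjointness of $J$.
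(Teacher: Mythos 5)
Your proposal is correct and follows essentially the same route as the paper: realize $\mathcal{H}=K\times K$ as formal sums $x+y\cdot n$, transport the quaternionic structure by formal expansion, and take $J$ to be left multiplication by $m$ (your $J(x,y)=(xm,ym)$ is exactly the paper's $J(x+y\cdot n)=(x-y\cdot n)\cdot m$ after expanding $nm=-mn$). If anything, your right scalar multiplication $(x,y)\cdot(a+bn)=(xa-y\bar b,\;xb+y\bar a)$ is the correct formula — the paper's displayed version $(x\alpha-y\beta,\;x\beta-y\alpha)$ contains sign/conjugation typos (it even fails for $q=1$) — so your bookkeeping with $nc=\bar c\,n$ is the right fix.
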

\begin{proof}
	Let $ \mathcal{H}:= K \times K$. We define addition and scalar multiplication on $\mathcal{H}$ as follows:
	\begin{equation*}
	(x,y) + (z, w) := (x+z, y+w), \; \text{for all}\; (x,y), (z,w) \in \mathcal{H}.
	\end{equation*}
	Let $q \in \mathbb{H}$. Then $q = \alpha + \beta \cdot n $, for some $\alpha, \beta \in \mathbb{C}_{m}$, where $n \in \mathbb{S}$ is such that $ m\cdot n = - n \cdot m $. Define a right scalar multiplication by
	\begin{equation}\label{slicecartesian}
	(x,y) \cdot (\alpha + \beta \cdot n) := (x \cdot \alpha - y \cdot \beta, \ x \cdot \beta - y \cdot \alpha).
	\end{equation}
	Define $ \left\langle \cdot | \cdot \right\rangle \colon \mathcal{H} \times \mathcal{H} \to \mathbb{H}$ by
	\begin{equation}\label{defofinnerproduct}
	\left\langle (x,y) | (z,w) \right\rangle = [ \left\langle x|z\right\rangle_{K} + \left\langle w | y \right\rangle_{K}] + [ \left\langle x | w\right\rangle_{K} - \left\langle z|y \right\rangle_{K} ] \cdot n,
	\end{equation}
	where $\left\langle \cdot | \cdot \right\rangle_{K}$ is the inner product on $K$.  Let $(x_{1},y_{1}), (x_{2}, y_{2}), (x_{3},y_{3}) \in \mathcal{H} $ and $q \in \mathbb{H}$. Then the following hold:  	
	\begin{enumerate}
		\item  Equation (\ref{defofinnerproduct}) implies that
		\begin{align*}
		\left\langle (x_{1},y_{1}) | (x_{1},y_{1})\right\rangle &= \Big[\left\langle x_{1} | x_{1}\right\rangle_{K} + \left\langle y_{1}| y_{1}\right\rangle_{K} \Big] + \Big[\left\langle x_{1} | y_{1}\right\rangle_{K} - \left\langle x_{1} | y_{1}\right\rangle_{K}\Big] \cdot n\\
		&= \|x_{1}\|^{2}+ \|y_{1}\|^{2} \\
		& \geq 0.
		\end{align*}
		Moreover,
		\begin{equation*}
		\left\langle (x_{1}, y_{1}) | (x_{1}, y_{1})\right\rangle = 0  \Leftrightarrow \|x_{1}\|^{2} + \|y_{1}\|^{2} = 0 \\
		\Leftrightarrow (x_{1},y_{1}) = (0,0).
		\end{equation*}
		\item If $q = \alpha + \beta \cdot n $, for $\alpha,\beta \in \mathbb{C}_{m}$ then
		\begin{align*}
		\left\langle (x_{1},y_{1})| (x_{2},y_{2})+(x_{3}, y_{3})\cdot q\right\rangle &= \left\langle (x_{1},y_{1})| (x_{2},y_{2})+(x_{3}, y_{3})\cdot (\alpha + \beta \cdot n)\right\rangle  \\
		&= \left\langle (x_{1},y_{1}) | (x_{2}+x_{3}\alpha -y_{3}\cdot \beta, y_{2}+x_{3} \cdot \beta-y_{3} \cdot \alpha) \right\rangle\\
		&= \left\langle x_{1} | x_{2}+ x_{3} \alpha-y_{3}\beta\right\rangle_{K} + \left\langle y_{2}+x_{3}\beta-y_{3}\alpha | y_{1}\right\rangle_{K} \\
		& \;\; + [\left\langle x_{1} | y_{2}+x_{3}\beta-y_{3}\alpha \right\rangle_{K} - \left\langle x_{2}+x_{3}\alpha-y_{3}\beta | y_{1}\right\rangle_{K} ] \cdot n \\
		&= \left\langle (x_{1},y_{1}) | (x_{2},y_{2})\right\rangle + \left\langle (x_{1},y_{1}) | (x_{3},y_{3})\right\rangle \cdot (\alpha + \beta \cdot n)\\
		&= \left\langle (x_{1},y_{1}) | (x_{2},y_{2})\right\rangle + \left\langle (x_{1},y_{1}) | (x_{3},y_{3})\right\rangle \cdot q.
		\end{align*}
		\item Conjugate property:
		\begin{align*}
		\left\langle (x_{1},y_{1}) | (x_{2},y_{2}) \right\rangle &= \Big[ \left\langle x_{1}|x_{2}\right\rangle_{K} + \left\langle y_{2} | y_{1} \right\rangle_{K}\Big] + \Big[ \left\langle x_{1} | y_{2}\right\rangle_{K} - \left\langle x_{2}|y_{1} \right\rangle_{K} \Big] \cdot n \\
		&= \overline{\left\langle x_{2}|x_{1}\right\rangle_{K} + \left\langle y_{1}|y_{2}\right\rangle_{K}} + \overline{[\left\langle y_{2}|x_{1}\right\rangle_{K} - \left\langle y_{1}|x_{2}\right\rangle_{K}]} \cdot n\\
		& = \overline{\left\langle (x_{2},y_{2}) | (x_{1},y_{1})\right\rangle}.
		\end{align*}
	\end{enumerate}
	This implies that $\left\langle \cdot | \cdot \right\rangle$ is an inner product on $\mathcal{H}$. The induced norm on $\mathcal{H}$ is given by
	\begin{equation}\label{defofinducednorm}
	\|(x,y)\|^{2} = \left\langle (x,y)|(x,y)\right\rangle = \|x\|^{2}_{K} + \|y\|^{2}_{K},
	\end{equation} 	
	for all $(x,y) \in \mathcal{H}$. Here $\|\cdot\|_{K}$ denote the norm on $K$ induced from $\left\langle \cdot | \cdot \right\rangle_{K}$.
	
	Since $K$ is complete, we see that $\mathcal{H}$ is complete with respect to the norm defined in Equation (\ref{defofinducednorm}).  Therefore $\mathcal{H}$ is a right quaternionic Hilbert space.
	
	If we identify $x$ in $K$ by $(x,0)$ then $x+y\cdot n$ is identified by $(x,y)$. That is
	\begin{equation*}
	x+y\cdot n = (x,0) + (y,0) \cdot n = (x,0) + (0,y) =  (x,y).
	\end{equation*}
	Here we used Equation (\ref{slicecartesian}) to conclude $(0,y) = (y,0) \cdot n$.
	From now onwards we write $x + y \cdot n$ instead of $(x,y) \in \mathcal{H}$.
	
	Define $J \colon \mathcal{H} \to \mathcal{H}$ by
	\begin{equation*}
	J(x+y \cdot n) = (x-y\cdot n) \cdot m, \; \text{for all}\; x+y\cdot n \in \mathcal{H}.
	\end{equation*}
	We shall prove that $J$ is anti self-adjoint and unitary. Let $x = x_{+} + x_{-} \in \mathcal{H}^{Jm}_{+} \oplus \mathcal{H}^{Jm}_{-}$. Then
	\begin{align*}
	\left\langle x\;\big|\;Jy\right\rangle &= \big\langle x_{+}+x_{-}\cdot n \;\big|\; J(y_{+}+y_{-} \cdot n)\big\rangle\\
	&= \big\langle x_{+} + x_{-} \cdot n \;\big|\; (y_{+}- y_{-} \cdot n) \cdot m\big\rangle \\
	&= \left\langle x_{+} \;\big|\; (y_{+}- y_{-} \cdot n) \cdot m \right\rangle + \overline{n}\left\langle x_{-} \;\big|\; (y_{+}- y_{-} \cdot n ) \cdot m \right\rangle\\
	&= \left\langle x_{+} \cdot \overline{m} \;\big|\; y_{+}+y_{-} \cdot n \right\rangle + \left\langle x_{-} \cdot \overline{m} \cdot n \;\big|\; y_{+}+y_{-} \cdot n \right\rangle \\
	&= \left\langle (-x_{+} + x_{-} \cdot n) \cdot m \;\big|\; y_{+}+y_{-} \cdot n\right\rangle\\
	&= \left\langle (-x_{+} + x_{-} \cdot n) \cdot m \;\big|\; y \cdot n\right\rangle.
	\end{align*}
	This implies that
	\begin{equation*}
	J^{*}(u) = J^{*}(u_{+}+u_{-}\cdot n) = (-u_{+}+u_{-}\cdot n) \cdot m = - (u_{+}-u_{-}\cdot n) = - J(u), \; \text{for all}\; u \in \mathcal{H}.
	\end{equation*} Therefore $J^{*}= -J$ and $J^{*}J = JJ^{*}=I$.
	
	We claim that $\mathcal{H}^{Jm}_{+}=K$. If $x \in \mathcal{H}^{Jm}_{+}$, then $J(x) = J(x_{+}+x_{-}\cdot n) = (x_{+}+x_{-}\cdot n )\cdot m$.  By the definition of $J$, we have
	\begin{equation*}
	(x-y\cdot n)\cdot m = (x + y\cdot n)\cdot m.
	\end{equation*}
	It implies that $y = 0$, that is $x \in K$. Conversely, if $x \in K$ then $J(x) = x \cdot m$, it shows that $x \in \mathcal{H}^{Jm}_{+}$. Hence $K = \mathcal{H}^{Jm}_{+}$.
\end{proof}
Now it is clear from Proposition \ref{slice}  that  a linear operator on any $\mathbb{C}_{m}$- Hilbert space can be extended uniquely to a quaternionic linear operator on some quaternionic Hilbert space.

The spherical spectrum of $\widetilde{T}$, for $T\in \mathcal{B}(\mathcal{H}^{Jm}_{+})$, is given  as follows:
\begin{lemma}\label{extnspectrum}\cite[Proposition 5.11]{Ghiloni}.
	Let $J \in \mathcal{B}(\mathcal{H})$ be anti self-adjoint and unitary, $m \in \mathbb{S}$.
	Let $T \in \mathcal{B}(\mathcal{H}^{Jm}_{+})$ and  $\widetilde{T}$ be the extension of $T$ as given in Proposition \ref{extension}. Then
	\[\sigma_{S}(\widetilde{T}) = \bigcup\limits_{\lambda \;\in \; \sigma(T)} [\lambda].\]
\end{lemma}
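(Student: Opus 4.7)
The plan is to exploit axial symmetry of both sides of the desired identity and then reduce the question to ordinary invertibility on the slice $\mathcal{H}^{Jm}_{+}$, where classical complex spectral theory applies. A first observation is that $\Delta_{q}(\widetilde{T})$ depends on $q$ only through the real numbers $q+\overline{q}=2\operatorname{Re}(q)$ and $|q|^{2}$, both of which are constant along each equivalence class $[q]$. Hence $\sigma_{S}(\widetilde{T})$ is automatically a union of spheres $[q]$, while the right-hand side $\bigcup_{\lambda\in\sigma(T)}[\lambda]$ is such a union by construction. Since $[a+mb]\cap \mathbb{C}_{m}=\{a+mb,\,a-mb\}$, the claimed identity will follow once I prove
\[
\sigma_{S}(\widetilde{T})\cap \mathbb{C}_{m}=\sigma(T)\cup \overline{\sigma(T)}.
\]

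Fix $q\in \mathbb{C}_{m}$. On the $\mathbb{C}_{m}$-Hilbert space $\mathcal{H}^{Jm}_{+}$ the operator $T$ is $\mathbb{C}_{m}$-linear, so right multiplication by elements of $\mathbb{C}_{m}$ commutes with $T$ and
\[
\Delta_{q}(T)=T^{2}-(q+\overline{q})T+|q|^{2}I=(T-qI)(T-\overline{q}I)=(T-\overline{q}I)(T-qI),
\]
a product of two commuting bounded $\mathbb{C}_{m}$-linear operators. By Equation~(\ref{deltaextn}) one has $\Delta_{q}(\widetilde{T})=\widetilde{\Delta_{q}(T)}$. Since $\widetilde{T}$ is bounded, the three clauses defining $\rho_{S}(\widetilde{T})$ together amount to ordinary invertibility of $\Delta_{q}(\widetilde{T})$ in $\mathcal{B}(\mathcal{H})$ (injectivity with bounded inverse on the range forces closed range, which combined with density yields surjectivity). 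Proposition~\ref{extension}(\ref{extninverse}) supplies one direction: invertibility of $\Delta_{q}(T)$ on $\mathcal{H}^{Jm}_{+}$ forces invertibility of $\widetilde{\Delta_{q}(T)}$ on $\mathcal{H}$. For the converse, the bounded inverse of $\Delta_{q}(\widetilde{T})$ must commute with $J$ (since $\Delta_{q}(\widetilde{T})$ does, by Proposition~\ref{extension}(2)), hence preserves $\mathcal{H}^{Jm}_{+}$, and its restriction inverts $\Delta_{q}(T)$.

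Finally, because $T-qI$ and $T-\overline{q}I$ commute in the Banach algebra $\mathcal{B}(\mathcal{H}^{Jm}_{+})$, their product is invertible if and only if each factor is, i.e., if and only if $q,\overline{q}\in \rho(T)$. Combining everything, $q\in \rho_{S}(\widetilde{T})\cap \mathbb{C}_{m}\iff q,\overline{q}\notin \sigma(T)$, which gives $\sigma_{S}(\widetilde{T})\cap \mathbb{C}_{m}=\sigma(T)\cup \overline{\sigma(T)}$, and axial symmetrization produces the identity claimed in the lemma. I expect the main subtlety to be the invertibility transfer across the extension: one must argue carefully that the bounded right $\mathbb{H}$-linear inverse of $\widetilde{\Delta_{q}(T)}$ on $\mathcal{H}$ restricts to a bounded $\mathbb{C}_{m}$-linear inverse of $\Delta_{q}(T)$ on $\mathcal{H}^{Jm}_{+}$, which is precisely what the $J$-commutation clause of Proposition~\ref{extension} is designed for.
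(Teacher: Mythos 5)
Your argument is correct, but note that the paper does not actually prove this lemma: it is quoted verbatim from Ghiloni--Moretti--Perotti \cite[Proposition 5.11]{Ghiloni}, so there is no in-paper proof to compare against. What you have written is essentially the standard argument from that reference: axial symmetry of $\sigma_{S}(\widetilde{T})$ (since $\Delta_{q}$ depends only on $q+\overline{q}$ and $|q|^{2}$), reduction to the slice via $\Delta_{q}(\widetilde{T})=\widetilde{\Delta_{q}(T)}$ together with the inverse- and $J$-commutation clauses of Proposition \ref{extension}, and the factorization $\Delta_{q}(T)=(T-qI)(T-\overline{q}I)$ on $\mathcal{H}^{Jm}_{+}$ for $q\in\mathbb{C}_{m}$. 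All the individual steps check out: the three resolvent conditions do collapse to invertibility in $\mathcal{B}(\mathcal{H})$ for a bounded operator, the inverse of a $J$-commuting invertible operator commutes with $J$ and hence restricts to $\mathcal{H}^{Jm}_{+}$, and the fact that a product of two commuting bounded operators is invertible iff both factors are is the standard ring-theoretic lemma. The only cosmetic point worth making explicit is that in the factorization the ``scalars'' $q,\overline{q}$ act by right multiplication on the $\mathbb{C}_{m}$-Hilbert space $\mathcal{H}^{Jm}_{+}$ and commute with the $\mathbb{C}_{m}$-linear operator $T$, which is exactly what makes the expansion of $(T-qI)(T-\overline{q}I)$ reproduce $\Delta_{q}(T)$; this is where the restriction $q\in\mathbb{C}_{m}$ is genuinely used.
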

\begin{theorem} \label{qspintermscsp} \cite[Corollary 5.13]{Ghiloni}
	Let $T \in \mathcal{B}(\mathcal{H})$ be normal and $J \in \mathcal{B}(\mathcal{H})$ be anti self-adjoint and unitary such that $TJ = JT$ and let $m \in \mathbb{S}$. Then the following holds true:
	\begin{equation*}
	\sigma(T|_{\mathcal{H}^{Jm}_{+}}) = \sigma_{S}(T) \cap \mathbb{C}_{m}^{+}, \; \sigma(T|_{\mathcal{H}^{Jm}_{-}}) = \sigma_{S}(T) \cap \mathbb{C}_{m}^{-}.
	\end{equation*}
	and  hence
	\begin{equation*}
	\sigma(T|_{\mathcal{H}^{Jm}_{+}}) = \overline{\sigma(T|_{\mathcal{H}^{Jm}_{-}})}.
	\end{equation*}
\end{theorem}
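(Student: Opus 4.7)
The plan is to identify $T$ with the extension of its restriction to $\mathcal{H}^{Jm}_{+}$ and then transfer the spectral information via Lemma \ref{extnspectrum}. Since $JT = TJ$, the converse direction of Proposition \ref{extension} would immediately identify $T$ with $\widetilde{S}$, where $S := T|_{\mathcal{H}^{Jm}_{+}}$ is a bounded $\mathbb{C}_{m}$-linear normal operator on the complex Hilbert space $\mathcal{H}^{Jm}_{+}$. Lemma \ref{extnspectrum} then yields
\[
\sigma_{S}(T) = \bigcup_{\lambda \in \sigma(S)} [\lambda],
\]
and the whole theorem reduces to locating $\sigma(S)$ inside $\mathbb{C}_{m}^{+}$.

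To do this, I would introduce the Cartesian decomposition $T = A + JB$, with $A = \tfrac{1}{2}(T + T^{*})$ and $B = -\tfrac{1}{2}J(T - T^{*})$. From $JT = TJ$ and $J^{*} = -J$ one gets $JT^{*} = T^{*}J$, and it follows that $A$ and $B$ are self-adjoint, commute with each other (by normality of $T$) and with $J$. Crucially, the particular $J$ of the statement is the one produced by \cite[Theorem 5.9]{Ghiloni}, chosen so that in addition $B \geq 0$. Restricting to $\mathcal{H}^{Jm}_{+}$, on which $J$ acts as right multiplication by $m$, gives
\[
S = A|_{\mathcal{H}^{Jm}_{+}} + m \cdot B|_{\mathcal{H}^{Jm}_{+}},
\]
with $A|_{\mathcal{H}^{Jm}_{+}}$ and $B|_{\mathcal{H}^{Jm}_{+}}$ commuting self-adjoint operators on a complex Hilbert space and $B|_{\mathcal{H}^{Jm}_{+}} \geq 0$. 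The joint functional calculus for commuting self-adjoint operators then forces $\sigma(S) \subseteq \{\alpha + m\beta : \alpha \in \mathbb{R},\ \beta \geq 0\} = \mathbb{C}_{m}^{+}$. Once this is established, $[\lambda] \cap \mathbb{C}_{m}^{+} = \{\lambda\}$ for every $\lambda \in \sigma(S)$, so intersecting the union above with $\mathbb{C}_{m}^{+}$ yields the first equality $\sigma_{S}(T) \cap \mathbb{C}_{m}^{+} = \sigma(S) = \sigma(T|_{\mathcal{H}^{Jm}_{+}})$.

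For the remaining identities I would pick $n \in \mathbb{S}$ with $mn = -nm$ and use the map $\Phi \colon \mathcal{H}^{Jm}_{+} \to \mathcal{H}^{Jm}_{-}$ defined by $\Phi(x) = x \cdot n$. A direct check using $J(x) = x \cdot m$ on $\mathcal{H}^{Jm}_{+}$ shows that $\Phi$ is a bijective isometry, $\mathbb{C}_{m}$-antilinear in the sense that $\Phi(x \cdot \zeta) = \Phi(x) \cdot \overline{\zeta}$ for $\zeta \in \mathbb{C}_{m}$, and satisfies $T|_{\mathcal{H}^{Jm}_{-}} \circ \Phi = \Phi \circ S$ by right $\mathbb{H}$-linearity of $T$. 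Standard reasoning with antilinear intertwiners then gives $\sigma(T|_{\mathcal{H}^{Jm}_{-}}) = \overline{\sigma(S)}$, which is the third equality; intersecting $\sigma_{S}(T) = \bigcup [\lambda]$ with $\mathbb{C}_{m}^{-} = \overline{\mathbb{C}_{m}^{+}}$ produces the second equality in the same manner.

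The main obstacle is the positivity claim $B \geq 0$: this is precisely what distinguishes the specific $J$ guaranteed by \cite[Theorem 5.9]{Ghiloni} from an arbitrary anti self-adjoint unitary commuting with $T$ (indeed, replacing $J$ by $-J$ would swap the roles of $\mathbb{C}_{m}^{\pm}$), and without it one only gets $\sigma(S) \subseteq \mathbb{C}_{m}$ rather than $\sigma(S) \subseteq \mathbb{C}_{m}^{+}$. Once this positivity is in place, everything else is a routine combination of Proposition \ref{extension}, Lemma \ref{extnspectrum}, and the antilinear symmetry between the two slices.
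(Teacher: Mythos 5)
The paper gives no proof of this statement at all: it is quoted verbatim as \cite[Corollary 5.13]{Ghiloni}, so there is nothing internal to compare your argument against. On its own merits your reconstruction is sound and is essentially the argument underlying Ghiloni--Moretti--Perotti's proof: reduce to $S=T|_{\mathcal{H}^{Jm}_{+}}$ via Proposition \ref{extension}, invoke Lemma \ref{extnspectrum} to get $\sigma_{S}(T)=\bigcup_{\lambda\in\sigma(S)}[\lambda]$, pin $\sigma(S)$ inside $\mathbb{C}_{m}^{+}$ via the decomposition $S=A_{+}+mB_{+}$ with $B_{+}\geq 0$, use $[\lambda]\cap\mathbb{C}_{m}^{+}=\{\lambda\}$, and transfer to $\mathcal{H}^{Jm}_{-}$ by the antilinear intertwiner $\Phi(x)=x\cdot n$. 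Your remark about positivity is the key point, and it deserves emphasis: as literally stated in the paper, with $J$ an \emph{arbitrary} anti self-adjoint unitary commuting with $T$, the theorem is false --- replacing $J$ by $-J$ preserves all stated hypotheses but exchanges $\mathcal{H}^{Jm}_{+}$ and $\mathcal{H}^{Jm}_{-}$, hence exchanges the two half-planes (e.g. $\mathcal{H}=\mathbb{H}$, $T=L_{i}$, $J=-L_{i}$ gives $\sigma(T|_{\mathcal{H}^{Ji}_{+}})=\{-i\}\neq\sigma_{S}(T)\cap\mathbb{C}_{i}^{+}=\{i\}$). The intended hypothesis, implicit in the citation, is that $J$ is the operator produced by \cite[Theorem 5.9]{Ghiloni}, i.e. that $-J(T-T^{*})=|T-T^{*}|\geq 0$; with that reading your proof goes through. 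The only minor gap worth closing is the last step: you need $\sigma_{S}(T)$ to be axially symmetric (closed under $\lambda\mapsto\overline{\lambda}$ within $\mathbb{C}_{m}$) to pass from $\overline{\sigma_{S}(T)\cap\mathbb{C}_{m}^{+}}$ to $\sigma_{S}(T)\cap\mathbb{C}_{m}^{-}$, but this is immediate from the description $\sigma_{S}(T)=\bigcup[\lambda]$ you already have.
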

Next, we generalize Proposition \ref{extension} for linear operators between two  $\mathbb{C}_{m}$ - Hilbert spaces.

\begin{theorem}\label{extension1}
	Let $\mathcal{H}_{1}, \mathcal{H}_{2}$ be quaternionic Hilbert spaces. Let $J_{1}, J_{2}$ be anti self-adjoint unitary operators on $\mathcal{H}_{1}$ and $\mathcal{H}_{2}$, respectively.  If $m \in \mathbb{S}$ and $ T \colon \mathcal{D}(T) \subseteq {\mathcal{H}_{1}}^{J_{1}m}_{+} \to {\mathcal{H}_{2}}^{J_{2}m}_{+}$ is a $\mathbb{C}_{m}$- linear, then there exists unique right $\mathbb{H}$- linear operator $\widetilde{T} \colon \mathcal{D}(\widetilde{T}) \to \mathcal{H}_{2} $ such that
	$ \mathcal{D}(\widetilde{T}) \bigcap {\mathcal{H}_{1}}^{J_{1}m}_{+} = \mathcal{D}(T), \ J_1(\mathcal{D}(\widetilde{T})) \subset \mathcal{D}(\widetilde{T})$ and $ \widetilde{T}(x) = T(x), $ for every $x \in \mathcal{D}(T).$
	Furthermore,
	\begin{enumerate}
		\item If $T \colon {\mathcal{H}_{1}}^{J_{1}m}_{+} \to {\mathcal{H}_{2}}^{J_{2}m}_{+}$ is bounded, then $\widetilde{T} \in \mathcal{B}(\mathcal{H}_{1}, \mathcal{H}_{2}) $ and  \label{norm1}$\|\widetilde{T}\| = \|T\|$.
		\item \label{commute1}$J_{2}\widetilde{T} = \widetilde{T} J_{1}$ on $\mathcal{D}(\widetilde{T})$.
	\end{enumerate}
	On the other hand, let $V \colon  \mathcal{D}(V) \subseteq \mathcal{H}_{1} \to \mathcal{H}_{2}$ be a right $\mathbb{H}$- linear . Then $V = \widetilde{U}$, for a unique $U \colon \mathcal{D}(V)\cap {\mathcal{H}_{1}}^{J_{1}m}_{+} \to {\mathcal{H}_{2}}^{J_{2}m}_{+}$ if and only if $J_{1}(\mathcal{D}(V)) \subseteq \mathcal{D}(V)$ and $J_{2}V(x) = VJ_{1}(x)$,\; for all $x \in \mathcal{D}(V)$.
\end{theorem}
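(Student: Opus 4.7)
The plan is to mimic the single-space construction behind Proposition \ref{extension} while carrying two distinct anti self-adjoint unitaries $J_{1}, J_{2}$ in parallel. The key enabling fact is that each $(\mathcal{H}_{j})^{J_{j}m}_{+}$ is a $\mathbb{C}_{m}$-Hilbert space and, for any $n \in \mathbb{S}$ with $mn = -nm$, one has the decomposition $\mathcal{H}_{j} = (\mathcal{H}_{j})^{J_{j}m}_{+} \oplus (\mathcal{H}_{j})^{J_{j}m}_{+} \cdot n$ from Remark \ref{J}. Since membership in the $\pm$-slice depends only on right multiplication by $m$, the algebra on the domain and codomain sides proceeds symmetrically.

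Fix $n \in \mathbb{S}$ with $mn = -nm$ and set
\[
\mathcal{D}(\widetilde{T}) := \{x_{+} + x_{-} \cdot n : x_{+}, x_{-} \in \mathcal{D}(T)\}, \qquad \widetilde{T}(x_{+} + x_{-} \cdot n) := T(x_{+}) + T(x_{-}) \cdot n.
\]
Well-definedness follows from uniqueness of the $\pm$-decomposition in $\mathcal{H}_{1}$. Right $\mathbb{H}$-linearity reduces to checking scalar multiplication by $\alpha + \beta n$ with $\alpha, \beta \in \mathbb{C}_{m}$; here one uses the $\mathbb{C}_{m}$-linearity of $T$ together with the right-module identity $(x \cdot m) \cdot n = -(x \cdot n) \cdot m$. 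The defining properties are then immediate: $\mathcal{D}(\widetilde{T}) \cap (\mathcal{H}_{1})^{J_{1}m}_{+} = \mathcal{D}(T)$ by uniqueness of the decomposition, $\widetilde{T}|_{\mathcal{D}(T)} = T$ by construction, and $J_{1}$-invariance from $J_{1}(x_{+} + x_{-} n) = (x_{+} m) + (x_{-} m) \cdot n$ with $x_{\pm} \cdot m \in \mathcal{D}(T)$. The commutation $J_{2}\widetilde{T} = \widetilde{T} J_{1}$ follows by evaluating both sides on $x_{+} + x_{-} n$: each yields $T(x_{+})\, m + T(x_{-})\, m\, n$, using $Tx_{\pm} \in (\mathcal{H}_{2})^{J_{2}m}_{+}$ and $\mathbb{C}_{m}$-linearity of $T$. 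If $T$ is bounded, orthogonality of the $\pm$-decomposition gives
\[
\|\widetilde{T}(x_{+} + x_{-} n)\|^{2} = \|T x_{+}\|^{2} + \|T x_{-}\|^{2} \leq \|T\|^{2}\, \|x_{+} + x_{-} n\|^{2},
\]
and the reverse inequality is immediate from $\widetilde{T}|_{\mathcal{D}(T)} = T$. Uniqueness of $\widetilde{T}$ is forced because right $\mathbb{H}$-linearity of any competitor $\widetilde{T}'$ demands $\widetilde{T}'(x_{-} \cdot n) = T(x_{-}) \cdot n$.

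For the converse direction, given $V$ with $J_{1}(\mathcal{D}(V)) \subseteq \mathcal{D}(V)$ and $J_{2}V = VJ_{1}$, set $U := V|_{\mathcal{D}(V) \cap (\mathcal{H}_{1})^{J_{1}m}_{+}}$. For $x$ in this domain, the commutation yields $J_{2}(Vx) = V(J_{1}x) = V(x \cdot m) = (Vx) \cdot m$, so $Vx \in (\mathcal{H}_{2})^{J_{2}m}_{+}$, making $U$ a $\mathbb{C}_{m}$-linear map between the correct slice spaces; then $V = \widetilde{U}$ by the uniqueness proved above. The main obstacle is essentially bookkeeping: carefully tracking the module identities relating right multiplication by $m$ and $n$, and verifying that the relations between $Tx$ and $J_{2}$ on the codomain mirror those between $x$ and $J_{1}$ on the domain. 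Once one is comfortable that both copies of the $\pm$-decomposition behave identically, the proof is parallel to that of Proposition \ref{extension}, with $(\mathcal{H}_{2}, J_{2})$ taking the role of $(\mathcal{H}, J)$ on the target side.
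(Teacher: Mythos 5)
Your construction is the same as the paper's: writing $x=x_{1}+x_{2}$ with $x_{2}=x_{-}\cdot n\in\Phi(\mathcal{D}(T))$, the paper's formula $\widetilde{T}(x)=T(x_{1})-T(x_{2}\cdot n)\cdot n$ is exactly your $T(x_{+})+T(x_{-})\cdot n$, and the verifications of linearity, the commutation $J_{2}\widetilde{T}=\widetilde{T}J_{1}$, the norm equality via the Pythagorean identity on $\mathcal{H}^{J_{1}m}_{+}\oplus\mathcal{H}^{J_{1}m}_{-}$, and the converse all match. The only point you gloss over is that uniqueness requires pinning down the \emph{domain} of a competitor, not just its values: one must use $J_{1}(\mathcal{D}(\widetilde{T}))\subset\mathcal{D}(\widetilde{T})$ together with right $\mathbb{H}$-linearity of the domain to show $\mathcal{D}(\widetilde{T})\cap{\mathcal{H}_{1}}^{J_{1}m}_{-}=\Phi(\mathcal{D}(T))$, hence $\mathcal{D}(\widetilde{T})=\mathcal{D}(T)\oplus\Phi(\mathcal{D}(T))$, as the paper does explicitly.
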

\begin{proof} First we show that the extension  is unique. Let $n \in \mathbb{S}$ be such that $m\cdot n = - n \cdot m $. Then $q \in \mathbb{H}$ can be written by
	\begin{equation*}
	q = q_{0}+q_{1}m+q_{2}n+q_{3}mn,
	\end{equation*}
	where $q_{\ell} \in \mathbb{R}$ for $\ell = 0,1,2,3$.
	
	Define $\Phi \colon \mathcal{H}_{1} \to \mathcal{H}_{1}$ by
	\begin{equation} \label{phi}
	\Phi(x) = x \cdot n, \; \text{for all} \;  x \in \mathcal{H}_{1}.
	\end{equation}
	It is clear that $\Phi$ is anti  $\mathbb{C}_{m}$- linear isomorphism. Moreover, $\Phi({\mathcal{H}_{1}}^{J_{1}m}_{\pm}) ={\mathcal{H}_{1}}^{J_{1}m}_{\mp}$.
	
	Assume that there exists an extension $\widetilde{T}$ of $T$ such that $\widetilde{T}(x) = Tx$, for all $x \in \mathcal{D}(T)$, with $\mathcal{D}(\widetilde{T}) \bigcap {\mathcal{H}_{1}}^{J_{1}m}_{+} = \mathcal{D}(T) $ and $J_1(\mathcal{D}(\widetilde{T})) \subset \mathcal{D}(\widetilde{T})$. We show that   $\mathcal{D}(\widetilde{T}) \cap {\mathcal{H}_{1}}^{J_{1}m}_{-} = \Phi(\mathcal{D}(T))$. Suppose $x \in \mathcal{D}(\widetilde{T}) \bigcap {\mathcal{H}_{1}}^{J_{1}m}_{-}$, then $x = \Phi(y)$, for some $y \in {\mathcal{H}_{1}}^{J_{1}m}_{+}$. Equivalently,  $y = - \Phi(x)$. Since $\Phi(\mathcal{D}(\widetilde{T})) = \mathcal{D}(\widetilde{T})$, we have $y \in \mathcal{D}(\widetilde{T}) \bigcap {\mathcal{H}_{1}}^{J_{1}m}_{+} = \mathcal{D}(T)$. It implies that $x \in \Phi(\mathcal{D}(T))$. If  $x \in \Phi(\mathcal{D}(T))$, then $x = \Phi(y)$, for some $y \in \mathcal{D}(T) = \mathcal{D}(\widetilde{T}) \bigcap {\mathcal{H}_{1}}^{J_{1}m}_{+}$. Therefore $x \in \mathcal{D}(\widetilde{T}) \bigcap {\mathcal{H}_{1}}^{J_{1}m}_{-}$.
	
	Let $x \in \mathcal{D}(\widetilde{T})$. Then
	\begin{equation*}
	x_{1}:= \frac{x - J_{1}xm}{2} \in {\mathcal{H}_{1}}^{J_{1}m}_{+}\; \; ; \; \;
	x_{2}:= \frac{x+J_{1}xm}{2} \in {\mathcal{H}_{1}}^{J_{1}m}_{-}.
	\end{equation*}
	Moreover,
	\begin{equation*}
	x = x_{1} + x_{2}.
	\end{equation*}
	This implies $\mathcal{D}(\widetilde{T}) = \mathcal{D}(T) \oplus \Phi(\mathcal{D}(T))$.  By the assumption on $\widetilde{T}$, we have
	\begin{equation} \label{defoftilde}
	\widetilde{T}(x) = T(x_{1}) - T(x_{2} \cdot n) \cdot n.
	\end{equation}
	The definition of $\widetilde{T}$ in Equation (\ref{defoftilde}) is determined by $T$. Hence the extension is unique.
	
	To show the  existence, define $\mathcal{D}(\widetilde{T}):=\mathcal{D}(T) \oplus \Phi(\mathcal{D}(T))$. It implies that $\mathcal{D}(\widetilde{T}) \bigcap {\mathcal{H}_{1}}^{J_{1}m}_{+} = \mathcal{D}(T)$. Since $\mathcal{D}(\widetilde{T})$ is right $\mathbb{H}$- linear subspace of $\mathcal{H}_{1}$, we have
	\begin{equation*}
	J_{1}(x) = J_{1}(x_{1}) + J_{1}(x_{2}) = x_{1}m - x_{2}m \in \mathcal{D}(\widetilde{T}).
	\end{equation*}
	In fact by Equation (\ref{defoftilde}), we have $\widetilde{T}(x) = T(x)$, for all $x \in \mathcal{D}(T)$.

	Proof of (\ref{norm1}): If $T \colon {\mathcal{H}_{1}}^{J_{1}m}_{+} \to {\mathcal{H}_{2}}^{J_{2}m}_{+}$ is bounded, then $\mathcal{D}(\widetilde{T}) = {\mathcal{H}_{1}}^{J_{1}m}_{+} \oplus {\mathcal{H}_{1}}^{J_{1}m}_{-} = \mathcal{H}_{1}$. Since $\widetilde{T}$ is the extension of $T$, it follows that  $\|T\| \leq \|\widetilde{T}\|$. If $ x = x_{1}+x_{2} \in \mathcal{H}_{1}$, then we have
	\begin{align*}
	\|\widetilde{T}x\|^{2} = \|{T}(x_{1}) - {T}(x_{2}\cdot n)\|^{2} &= \|Tx_{1}\|^{2} + \|T(x_{2}\cdot n)\|^{2}\\
	&\leq \|T\|^{2} (\|x_{1}\|^{2}+\|x_{2}\|^{2})\\
	&\leq \|T\|^{2} \|x\|^{2}.
	\end{align*}
	This implies that $\|\widetilde{T}\|\leq \|T\|$. Hence $\|\widetilde{T}\| = \|T\|$.
	
	\noindent Proof of (\ref{commute1}): If $x \in \mathcal{D}(\widetilde{T})$,  then $x = x_{1}+x_{2}$ with $x_{1} \in \mathcal{D}(T), \; x_{2} \in \Phi(\mathcal{D}(T))$ and $J_{1}(x) \in \mathcal{D}(\widetilde{T})$. Moreover,
	\begin{align*}
	J_{2}\widetilde{T}(x_{1}+x_{2}) &= J_{2}[T(x_{1})- T(x_{2}\cdot n)\cdot n]  \\
	&= J_{2}(T(x_{1})) - J_{2}(T(x_{2}\cdot n)) \cdot n \\
	&= T(x_{1}) \cdot m - T(x_{2}\cdot n) \cdot m \cdot n \\
	&= T(x_{1}\cdot m) - T(-x_{2}\cdot m\cdot n)\cdot n  \\
	&= T(J_{1}x)-T(J_{1}x_{2}\cdot n) \cdot n \\
	&= \widetilde{T}(J_{1}x_{1}+J_{1}x_{2})\\
	&= \widetilde{T}J_{1}(x).
	\end{align*}
	If $V = \widetilde{U}$, for some $U \colon \mathcal{D}(V)\bigcap {\mathcal{H}_{1}}^{J_{1}m}_{+} \to {\mathcal{H}_{2}}^{J_{2}m}_{+}$, then $J_{1}(\mathcal{D}(V)) \subset \mathcal{D}(V)$. It is clear from the earlier proof that $\mathcal{D}(V) = \mathcal{D}(U) \oplus \Phi(\mathcal{D}(U))$. For $x=x_{1}+x_{2} \in \mathcal{D}(V)$, we have
	\begin{align*}
	J_{2}V(x) &= J_{2}(Ux_{1} - U(x_{2} \cdot n) \cdot n) \\
	&= U(x_{1}) \cdot m - U(x_{2} \cdot n) \cdot m \cdot n\\
	&= U(x_{1} \cdot m) + U(x_{2} \cdot n \cdot m) \cdot n \\
	&= U(J_{1}x_{1}) - U(J_{1}x_{2} \cdot n ) \cdot n\\
	&= VJ_{1}(x).
	\end{align*}
	
	Conversely, assume that $J_{2}V = VJ_{1}.$  That is $J_{2}Vx = VJ_{1}x, \forall \ x \in \mathcal{D}(V)$. It implies that $J_{1}x \in \mathcal{D}(V)$, for every $x \in \mathcal{D}(V)$. Hence $V\big(\mathcal{D}(V) \bigcap {\mathcal{H}_{1}}^{J_{1}m}_{+}\big) \subseteq {\mathcal{H}_{2}}^{J_{2}m}_{+}$.
	
	Define $U\colon \mathcal{D}(V)\bigcap {\mathcal{H}_{1}}^{J_{1}m}_{+} \to {\mathcal{H}_{2}}^{J_{2}m}_{+}$ by \begin{equation*}
	Ux = Vx, \; \text{for all} \; x \in \mathcal{D}(U).
	\end{equation*}
	Here $V$ is right $\mathbb{H}$- linear extension of $U$ such that  $J_{1}(\mathcal{D}(V)) \subset \mathcal{D}(V)$, by the uniqueness of extension, we have $V = \widetilde{U}$.
\end{proof}
Our aim is to prove that $L^{2}(\Omega;\mathbb{C}_{m};\mu)= L^{2}(\Omega;\mathbb{H};\mu)^{Jm}_{+}$, for some anti self-adjoint and unitary $J \in \mathcal{B}(L^{2}\big(\Omega;\mathbb{H};\mu)\big)$. To establish this result, we need the following theorem.
\begin{theorem}\label{directsum}
	Let $m,n \in \mathbb{S}$ be such that $m \cdot n = - n \cdot m$. Let $(\Omega, \mu)$ be a measure space. Then $L^{2}(\Omega; \mathbb{C}_{m}; \mu)$ is closed in $L^{2}(\Omega; \mathbb{H}; \mu)$. Moreover,
	\begin{equation*}
	L^{2}(\Omega; \mathbb{H}; \mu) = L^{2}(\Omega; \mathbb{C}_{m}; \mu) \oplus \Phi(L^{2}(\Omega; \mathbb{C}_{m}; \mu)),
	\end{equation*} 	
	where $\Phi(f) = f\cdot n$, for every $f \in L^{2}(\Omega; \mathbb{H}; \mu)$.
\end{theorem}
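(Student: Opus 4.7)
My plan rests on the algebraic fact that, because $nm = -mn$, the set $\{1, m, n, mn\}$ is an $\mathbb{R}$-basis of $\mathbb{H}$, and hence every $q \in \mathbb{H}$ admits a unique decomposition $q = \alpha + \beta \cdot n$ with $\alpha, \beta \in \mathbb{C}_m$. The key pointwise identity I will use is $|q|^2 = |\alpha|^2 + |\beta|^2$; this follows from a short quaternionic computation exploiting $n\alpha = \overline{\alpha}\, n$ for $\alpha \in \mathbb{C}_m$ (a consequence of $nm = -mn$) together with the observation that $\beta \cdot n$ is a pure imaginary quaternion, so $\overline{\beta \cdot n} = -\beta \cdot n$.

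For the closedness statement, I would take a sequence $(f_k) \subset L^2(\Omega;\mathbb{C}_m;\mu)$ converging to some $f \in L^2(\Omega;\mathbb{H};\mu)$, extract an a.e.-convergent subsequence by the standard $L^2$ argument, and use that $\mathbb{C}_m$ is closed in $\mathbb{H}$ to conclude that $f$ takes values in $\mathbb{C}_m$ almost everywhere. This immediately places $f$ in $L^2(\Omega;\mathbb{C}_m;\mu)$.

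For the direct sum decomposition, I will introduce the $\mathbb{R}$-linear continuous projections $\pi_1, \pi_2 \colon \mathbb{H} \to \mathbb{C}_m$ defined by $\pi_1(\alpha + \beta n) = \alpha$ and $\pi_2(\alpha + \beta n) = \beta$. Given $f \in L^2(\Omega;\mathbb{H};\mu)$, the components $f_i := \pi_i \circ f$ are $\mathbb{C}_m$-valued and measurable (composition of a measurable map with a continuous one), and the pointwise norm identity gives $\int_\Omega |f|^2\, d\mu = \int_\Omega |f_1|^2\, d\mu + \int_\Omega |f_2|^2\, d\mu$, so $f_1, f_2 \in L^2(\Omega;\mathbb{C}_m;\mu)$. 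By construction $f = f_1 + f_2 \cdot n = f_1 + \Phi(f_2)$, which gives the sum. Finally, the intersection $L^2(\Omega;\mathbb{C}_m;\mu) \cap \Phi(L^2(\Omega;\mathbb{C}_m;\mu))$ is trivial, since any function lying in it is pointwise valued in $\mathbb{C}_m \cap \mathbb{C}_m \cdot n = \{0\}$.

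The main obstacle I anticipate is purely algebraic: verifying the pointwise identity $|q|^2 = |\alpha|^2 + |\beta|^2$ requires careful handling of the noncommutative multiplication and of the quaternionic conjugate of $\beta \cdot n$ relative to the $\mathbb{C}_m$-conjugate of $\alpha$. Once that identity is in hand, the measure-theoretic steps (closedness via a.e.\ subsequences, $L^2$-integrability of the components, triviality of the intersection) are essentially routine, and the decomposition $L^2(\Omega;\mathbb{H};\mu) = L^2(\Omega;\mathbb{C}_m;\mu) \oplus \Phi(L^2(\Omega;\mathbb{C}_m;\mu))$ as an internal direct sum of closed $\mathbb{C}_m$-subspaces follows.
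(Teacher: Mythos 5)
Your proposal is correct, and for the direct-sum half it is essentially the paper's argument in different notation: the paper writes the two $\mathbb{C}_m$-components $F_1,F_2$ of $f$ out explicitly as quaternionic expressions in $f$, $\overline{f}$, $m$, $n$, $mn$, which is exactly your $\pi_1\circ f$, $\pi_2\circ f$; the paper then only uses the inequality $|F_\ell|\le |f|$ to get integrability, whereas your pointwise identity $|q|^2=|\alpha|^2+|\beta|^2$ (which does hold, since $\{1,m,n,mn\}$ is an orthonormal $\mathbb{R}$-basis of $\mathbb{H}$ when $mn=-nm$) sharpens this to the equality $\|f\|^2=\|f_1\|^2+\|f_2\|^2$. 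Where you genuinely diverge is the closedness step: the paper integrates your pointwise identity into the Pythagorean relation $\|f_k-f\|^2=\|f_k-F_1\|^2+\|F_2\|^2$ for $f_k$ in $L^2(\Omega;\mathbb{C}_m;\mu)$ and reads off $F_2=0$ in the limit, while you pass to an a.e.-convergent subsequence and use that $\mathbb{C}_m$ is closed in $\mathbb{H}$. Both are valid; the paper's route is slicker once the norm identity is available (and simultaneously exhibits $\Phi(L^2(\Omega;\mathbb{C}_m;\mu))$ as the orthogonal-type complement), whereas yours is more elementary and does not need the cross terms to cancel, only the triviality of $\mathbb{C}_m\cap\mathbb{C}_m\cdot n$, which you also correctly invoke to make the sum direct.
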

\begin{proof} If $f \in L^{2}(\Omega; \mathbb{H}; \mu)$, then for all $x \in \Omega$, $f(x) = F_{1}(x) + F_{2}(x) \cdot n$, where
	\begin{align*}
	F_{1}(x) &= \frac{1}{2}(f(x)+\overline{f(x)}) - \frac{1}{2}(f(x)m+\overline{f(x)m})m ;\\
	F_{2}(x)&=\frac{1}{2}(f(x)n+\overline{f(x)n}) - \frac{1}{2}(f(x)mn+\overline{f(x)mn})m.
	\end{align*}
	Clearly, $F_{\ell}$ is $\mathbb{C}_{m}$- valued function on $\Omega$, for $\ell \in \{1,2\}$. Since
	\[\int\limits_{\Omega}|F_{\ell}(x)|^{2}d\mu(x) \leq \int\limits_{\Omega}|f(x)|^{2}d\mu(x) < \infty ,\]
	we conclude that $F_{\ell} \in L^{2}(\Omega;\mathbb{C}_{m}; \mu)$. This implies that $f = F_{1} + \Phi(F_{2})$. Thus
	\begin{equation*}
	L^{2}(\Omega; \mathbb{H}; \mu) = L^{2}(\Omega; \mathbb{C}_{m}; \mu) \oplus \Phi(L^{2}(\Omega; \mathbb{C}_{m}; \mu)).
	\end{equation*}
	Now we  show that $L^{2}(\Omega; \mathbb{C}_{m}; \mu)$ is closed. Let $\{f_{k}\}$ be a sequence in $L^{2}(\Omega; \mathbb{H}; \mu)$. If $\{f_{k}\}$ converges to $f = F_{1}+F_{2} \cdot n$ in $L^{2}(\Omega; \mathbb{H}; \mu)$, then
	\begin{align*}
	\|f_{k}-f\|^{2} = \|(f_{k}-F_{1}) - F_{2}\cdot n\|^{2} = \|f_{k}-F_{1}\|^{2} + \|F_{2}\|^{2}.
	\end{align*}
	Since $f_{k} \to f$, it follows that $\|f_{k}-f_{1}\|^{2} \to 0$, as $k \to \infty$ and $f_{2} = 0$. Therefore $L^{2}(\Omega; \mathbb{C}_{m}; \mu)$ is closed in $L^{2}(\Omega; \mathbb{H}; \mu)$.
\end{proof}
\begin{corollary} Let $(\Omega,\mu)$ be a measure space and $m \in \mathbb{S}$. Then there exist an anti self-adjoint and unitary $J \in \mathcal{B}(L^{2}(\Omega; \mathbb{H}; \mu))$ such that
	\begin{equation*}
	L^{2}(\Omega; \mathbb{H}; \mu)^{Jm}_{+} = L^{2}(\Omega; \mathbb{C}_{m}; \mu).
	\end{equation*}
	\begin{proof} Let $f \in L^{2}(\Omega; \mathbb{H}; \mu)$. Then by Theorem \ref{directsum}, we can write $f= F_{1} + F_{2} \cdot n$, for some $F_{1}, F_{2} \in L^{2}(\Omega; \mathbb{C}_{m}; \mu)$. Define $J$ on $L^{2}(\Omega; \mathbb{H}; \mu)$ by
		\begin{equation*}
		J(F_{1}+F_{2}\cdot n) = (F_{1} - F_{2} \cdot n) \cdot m.
		\end{equation*}
		As in the Proof of the Proposition \ref{slice}, we can show that $J$ is anti self-adjoint and unitary, and 	
		\begin{equation*}
		L^{2}(\Omega; \mathbb{H}; \mu)^{Jm}_{+} = L^{2}(\Omega; \mathbb{C}_{m}; \mu). \qedhere
		\end{equation*}
	\end{proof}
\end{corollary}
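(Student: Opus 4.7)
The plan is to mimic the construction of Proposition \ref{slice}, with the direct-sum decomposition provided by Theorem \ref{directsum} playing the role of the Cartesian-product decomposition $K \times K$. Pick any $n \in \mathbb{S}$ with $mn = -nm$. By Theorem \ref{directsum}, every $f \in L^{2}(\Omega;\mathbb{H};\mu)$ decomposes uniquely as $f = F_{1} + F_{2}\cdot n$ with $F_{1}, F_{2} \in L^{2}(\Omega;\mathbb{C}_{m};\mu)$, so I would define
\[
J(F_{1}+F_{2}\cdot n) := (F_{1}-F_{2}\cdot n)\cdot m.
\]
This is the exact analogue of the operator $J$ built in Proposition \ref{slice} under the identification of the two summands $L^{2}(\Omega;\mathbb{C}_{m};\mu)$ and $\Phi(L^{2}(\Omega;\mathbb{C}_{m};\mu))$ with the two copies of $K$.

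Next I would verify the three structural properties of $J$ in turn. Right $\mathbb{H}$-linearity reduces by $\mathbb{R}$-linearity to checking compatibility with right multiplication by $m$ and by $n$, a routine computation using $nm=-mn$. For $J^{2} = -I$, I would rewrite $(F_{1}-F_{2}\cdot n)\cdot m = F_{1}m + F_{2}m\cdot n$ (both coefficients lie in $\mathbb{C}_{m}$, so the result is presented in the canonical form) and apply $J$ again to obtain $F_{1}m^{2} + F_{2}m^{2}\cdot n = -(F_{1}+F_{2}\cdot n)$. For the anti self-adjoint identity $J^{*} = -J$, I would transcribe the inner-product calculation already done in Proposition \ref{slice}, using that the $L^{2}$ inner product on $L^{2}(\Omega;\mathbb{H};\mu)$ restricts on each summand to the inner product of $L^{2}(\Omega;\mathbb{C}_{m};\mu)$, and that the two summands are mutually orthogonal in the sense $\langle F \mid G\cdot n\rangle + \langle G\cdot n \mid F\rangle = 0$.

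Finally, to identify the slice, note that for $f = F_{1}+F_{2}\cdot n$ the eigenvalue equation $J(f) = f\cdot m$ reads
\[
(F_{1}-F_{2}\cdot n)\cdot m = (F_{1}+F_{2}\cdot n)\cdot m,
\]
which forces $F_{2} = 0$; conversely every $f \in L^{2}(\Omega;\mathbb{C}_{m};\mu)$ satisfies $J(f) = f\cdot m$ by inspection. Hence $L^{2}(\Omega;\mathbb{H};\mu)^{Jm}_{+} = L^{2}(\Omega;\mathbb{C}_{m};\mu)$. The only mildly delicate point is the anti self-adjoint verification, where one must keep track of the signs produced by $nm = -mn$ when pushing $n$'s across the inner product; however the bookkeeping is exactly that already carried out in Proposition \ref{slice}, so I do not anticipate a substantive obstacle beyond referring to that computation.
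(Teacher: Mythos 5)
Your proposal is correct and follows essentially the same route as the paper: both invoke Theorem \ref{directsum} to write $f = F_{1}+F_{2}\cdot n$, define $J(F_{1}+F_{2}\cdot n)=(F_{1}-F_{2}\cdot n)\cdot m$, and verify the required properties by transcribing the computations from Proposition \ref{slice}. Your write-up is in fact slightly more explicit than the paper's (which simply defers all verifications to Proposition \ref{slice}), but there is no substantive difference in approach.
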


\section{Spectral theorem: Bounded operators}
In this section we prove the spectral theorem (multiplication form) for bounded normal operators on a quaternionic Hilbert space. We recall the spectral theorem in complex Hilbert spaces.
\begin{theorem}\cite[Theorem 11.5]{conway1}\label{complexbound}
	Let $K$ be a complex Hilbert space. If $N \in \mathcal{B}(K)$ is a normal operator, then there is a measure space $(X,\mu)$ and an essentially bounded $\mu$- measurable function $\phi \colon X \to \mathbb{C}$ such that $N$ is unitarily equivalent to $L_{\phi}$, where $L_{\phi}$ is a left multiplication by $\phi$  acting on $L^{2}(X;\mathbb{C};\mu)$.
	
	More over, if $K$ is separable then the measure space  obtained above $(X, \mu)$ is $\sigma$- finite.
\end{theorem}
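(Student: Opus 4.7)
The plan is to reduce the problem to the case of a single cyclic vector, handle that case via the Riesz representation theorem, and then patch the cyclic pieces together. First, I would invoke the continuous functional calculus: since $N$ is normal, the $C^{*}$-algebra generated by $N$, $N^{*}$ and the identity is commutative, and by the Gelfand-Naimark theorem it is isometrically $*$-isomorphic to $C(\sigma(N))$, where $\sigma(N) \subset \mathbb{C}$ is the compact spectrum of $N$. This yields a functional calculus $f \mapsto f(N)$ with $\|f(N)\| = \|f\|_{\infty}$.

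Next, using Zorn's lemma I would decompose $K = \bigoplus_{\alpha \in A} K_{\alpha}$ into an orthogonal family of closed subspaces, each invariant under $N$ and $N^{*}$ and each \emph{cyclic}: there exists $e_{\alpha} \in K_{\alpha}$ with $K_{\alpha} = \overline{\{f(N) e_{\alpha} : f \in C(\sigma(N))\}}$. For each summand, the linear functional $\Lambda_{\alpha}(f) := \langle e_{\alpha}, f(N) e_{\alpha} \rangle$ is positive on nonnegative $f$, so the Riesz representation theorem yields a finite positive regular Borel measure $\mu_{\alpha}$ on $\sigma(N)$ with $\Lambda_{\alpha}(f) = \int f \, d\mu_{\alpha}$. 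Using $N^{*}N = NN^{*}$ one obtains $\|f(N) e_{\alpha}\|^{2} = \int |f|^{2}\, d\mu_{\alpha}$, so the assignment $f \mapsto f(N) e_{\alpha}$ extends by continuity to a unitary $U_{\alpha} \colon L^{2}(\sigma(N), \mu_{\alpha}) \to K_{\alpha}$ intertwining multiplication by the coordinate function $z$ with $N|_{K_{\alpha}}$.

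Third, I would assemble the pieces. Set $X := \bigsqcup_{\alpha} \sigma(N)_{\alpha}$ (one copy of $\sigma(N)$ per index) with the sum measure $\mu := \bigsqcup_{\alpha} \mu_{\alpha}$, and let $\phi \colon X \to \mathbb{C}$ restrict to the coordinate function on each copy. Then $L^{2}(X, \mu) \cong \bigoplus_{\alpha} L^{2}(\sigma(N), \mu_{\alpha})$, and $U := \bigoplus_{\alpha} U_{\alpha}$ is a unitary from $L^{2}(X, \mu)$ onto $K$ with $U^{*} N U = L_{\phi}$. Since $\sigma(N)$ is compact and $\phi$ is the coordinate function, $\phi$ is essentially bounded with $\|\phi\|_{\infty} = \|N\|$.

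The main obstacle is the cyclic-subspace decomposition: one must argue via Zorn's lemma that a maximal orthogonal family of cyclic $(N, N^{*})$-invariant subspaces must exhaust $K$, which reduces to showing that the orthogonal complement of any proper such direct sum contains a nonzero cyclic subspace --- any nonzero vector in that complement generates one. For the final clause, if $K$ is separable then the index set $A$ is at most countable and each $\mu_{\alpha}$ is finite (indeed $\mu_{\alpha}(\sigma(N)) = \|e_{\alpha}\|^{2}$), so $\mu$ is $\sigma$-finite.
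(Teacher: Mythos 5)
Your proposal is correct, but there is nothing in the paper to compare it against: Theorem \ref{complexbound} is imported verbatim from Conway \cite[Theorem 11.5]{conway1} and is not proved in this article --- it is the classical complex result that the authors use as a black box and then lift to the quaternionic setting via the slice decomposition. What you have written is essentially the standard proof from that cited source (and from Reed--Simon): Gelfand--Naimark applied to the commutative $C^{*}$-algebra $C^{*}(N,N^{*},I)\cong C(\sigma(N))$, a Zorn's-lemma decomposition of $K$ into an orthogonal family of reducing cyclic subspaces, the Riesz representation theorem producing a finite regular Borel measure $\mu_{\alpha}$ on $\sigma(N)$ for each cyclic vector, the identity $\|f(N)e_{\alpha}\|^{2}=\int|f|^{2}\,d\mu_{\alpha}$ furnishing a unitary $L^{2}(\sigma(N),\mu_{\alpha})\to K_{\alpha}$, and the disjoint-union measure space assembling the pieces. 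The argument is sound; the only points worth making explicit are (i) that the extension of $f\mapsto f(N)e_{\alpha}$ to a unitary requires $C(\sigma(N))$ to be dense in $L^{2}(\sigma(N),\mu_{\alpha})$, which holds precisely because the Riesz measure is regular on the compact set $\sigma(N)$, and (ii) that the maximality step uses that the orthogonal complement of a sum of reducing subspaces is again reducing for $N$ and $N^{*}$, so a nonzero vector there does generate a new cyclic summand orthogonal to the previous ones. With those remarks your proof is complete and matches the approach of the reference the paper relies on.
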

Though the multiplication form of a bounded quaternionic normal operator is proved via integral representation (see \cite[Theorem 4.4]{Ghiloni2}), we prove this result similar to the classical setup  by exploiting Theorem \ref{complexbound} and Proposition \ref{extension}.
\subsection*{Multiplication form:}
\begin{theorem}\label{multiplication}\label{quaternionbounded}
	Let $T\in\mathcal{B}(\mathcal{H})$ be normal and fix $ m \in \mathbb{S}.$  Then there exists
	\begin{enumerate}
		\item[(a)] a Hilbert basis $\mathcal{N}_{m}$ of $\mathcal{H}$
		\item[(b)] a  measure space $(\Omega, \mu)$
		\item[(c)] a unitary operator $U \colon \mathcal{H} \to L^{2}(\Omega; \mathbb{H}; \mu)$ and
		\item[(d)]  an essentially bounded $\mu$- measurable function $ \phi \colon \Omega \to \mathbb{C}_{m}$
	\end{enumerate}
	such that, if $T$ is expressed with respect to $\mathcal{N}_{m}$, then
	\begin{equation*}
	T = U^{*}{{M}}_{\phi}U,
	\end{equation*}
	where ${M}_{\phi}$  is a bounded multiplication operator on $L^{2}(\Omega; \mathbb{H}; \mu)$.
	
	Moreover,
	\begin{enumerate}
		
		\item \label{norm} $\|T\| = \text{ess\ sup}(|\phi|)$ \\
		\item \label{spectrum}$\sigma_{S}(T) = \bigcup\limits_{\lambda \in \; \text{ess ran}\; (\phi)} [\lambda]$.
	\end{enumerate}
	Further more, if $\mathcal{H}$ is separable Hilbert space, then the obtained measure space $(\Omega, \mu)$ is $\sigma$- finite.
\end{theorem}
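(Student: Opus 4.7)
The plan is to reduce to the classical complex spectral theorem by slicing along an anti self-adjoint unitary that commutes with $T$, and then to lift the resulting multiplication representation via the extension machinery built in the previous section.

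First, Remark \ref{extensionrmk} supplies an anti self-adjoint unitary $J \in \mathcal{B}(\mathcal{H})$ with $TJ = JT$. Then $J(\mathcal{H}^{Jm}_+) \subset \mathcal{H}^{Jm}_+$ and $T$ commutes with $J$, so the converse part of Proposition \ref{extension} exhibits $T$ as the unique right $\mathbb{H}$-linear extension of its restriction $T_+ := T|_{\mathcal{H}^{Jm}_+}$, a bounded $\mathbb{C}_m$-linear normal operator on the $\mathbb{C}_m$-Hilbert space $\mathcal{H}^{Jm}_+$. Applying Theorem \ref{complexbound} (valid in any $\mathbb{C}_m$-Hilbert space, by the Note after Remark \ref{J}) to $T_+$ produces a measure space $(\Omega, \mu)$, an essentially bounded $\mu$-measurable $\phi : \Omega \to \mathbb{C}_m$, and a $\mathbb{C}_m$-linear unitary $U_+ : \mathcal{H}^{Jm}_+ \to L^2(\Omega; \mathbb{C}_m; \mu)$ with $T_+ = U_+^{*} L_\phi U_+$. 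If $\mathcal{H}$ is separable, then so is $\mathcal{H}^{Jm}_+$ by Lemma \ref{separable}, and the final clause of Theorem \ref{complexbound} makes $(\Omega, \mu)$ $\sigma$-finite.

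Next I lift to $\mathcal{H}$. By the corollary at the end of Section~2, there is an anti self-adjoint unitary $J' \in \mathcal{B}(L^2(\Omega; \mathbb{H}; \mu))$ with $L^2(\Omega; \mathbb{H}; \mu)^{J'm}_+ = L^2(\Omega; \mathbb{C}_m; \mu)$. Theorem \ref{extension1} then extends $U_+$ uniquely to a bounded right $\mathbb{H}$-linear operator $U := \widetilde{U_+} : \mathcal{H} \to L^2(\Omega; \mathbb{H}; \mu)$ with $\|U\| = \|U_+\| = 1$ and $J'U = UJ$. Using the explicit formula $U(x_1 + x_2) = U_+(x_1) - U_+(x_2 \cdot n) \cdot n$ from the proof of Theorem \ref{extension1}, together with the direct-sum decompositions $\mathcal{H} = \mathcal{H}^{Jm}_+ \oplus \Phi(\mathcal{H}^{Jm}_+)$ and $L^2(\Omega;\mathbb{H};\mu) = L^2(\Omega;\mathbb{C}_m;\mu) \oplus \Phi(L^2(\Omega;\mathbb{C}_m;\mu))$ supplied by Theorem \ref{directsum}, one checks that $U$ is isometric and surjective, hence quaternionic unitary. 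Likewise, the extension of $L_\phi$ coincides with the quaternionic multiplication operator $M_\phi$: on a decomposed element $f_1 + f_2 \cdot n$, both send it to $\phi \cdot f_1 + (\phi \cdot f_2) \cdot n$, using that $\phi$ is $\mathbb{C}_m$-valued so that right multiplication by $\phi$ preserves the subspace $L^2(\Omega;\mathbb{C}_m;\mu) \cdot n$. Since $T$ and $U^{*} M_\phi U$ are then both right $\mathbb{H}$-linear extensions of $T_+$ that commute with $J$, the uniqueness clause of Proposition \ref{extension} forces $T = U^{*} M_\phi U$.

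For the Hilbert basis $\mathcal{N}_m$, take any $\mathbb{C}_m$-orthonormal basis of $\mathcal{H}^{Jm}_+$; by Remark \ref{J} it is automatically a Hilbert basis of $\mathcal{H}$. The norm identity comes from Theorem \ref{extension1}(\ref{norm1}) combined with the complex version, $\|T\| = \|\widetilde{T_+}\| = \|T_+\| = \|L_\phi\| = \text{ess\ sup}(|\phi|)$. The spectral identity follows by composing Lemma \ref{extnspectrum}, which gives $\sigma_S(T) = \bigcup_{\lambda \in \sigma(T_+)}[\lambda]$, with the classical identity $\sigma(L_\phi) = \text{ess\ ran}(\phi)$. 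The main obstacle will be the extension bookkeeping: verifying that the lifted $\widetilde{U_+}$ is genuinely \emph{unitary} (not merely bounded of norm one) and that extending the $\mathbb{C}_m$-valued multiplication $L_\phi$ really reproduces the full quaternionic $M_\phi$. Both reductions come down to applying the explicit formula of Theorem \ref{extension1} to $x = x_1 + x_2 \cdot n$ decompositions and exploiting the invariance property described above.
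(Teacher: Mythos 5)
Your proposal follows essentially the same route as the paper: restrict $T$ to the slice $\mathcal{H}^{Jm}_+$ via the commuting anti self-adjoint unitary $J$, apply the classical spectral theorem to $T_+$, and lift $U_+$ and $L_\phi$ back to $\mathcal{H}$ and $L^2(\Omega;\mathbb{H};\mu)$ using Theorem \ref{extension1} and the decomposition of Theorem \ref{directsum}. The only cosmetic difference is the last step, where you invoke uniqueness of the right $\mathbb{H}$-linear extension to conclude $T = U^{*}M_\phi U$, whereas the paper verifies the identity by a direct inner-product computation; both are valid.
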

\begin{proof}
	By Remark \ref{extensionrmk}, $T_{+} \colon \mathcal{H}^{Jm}_{+} \to \mathcal{H}^{Jm}_{+}$ is the unique $\mathbb{C}_{m}$- linear bounded normal operator such that $\widetilde{T}_{+} = T$. If $\mathcal{N}_{m}$ be Hilbert basis for $\mathcal{H}^{Jm}_{+}$, then by Remark \ref{J}, $\mathcal{N}_{m}$ is Hilbert basis for $\mathcal{H}$ and moreover,
	\begin{equation*}
	J(x) = \sum\limits_{z \in \mathcal{N}} z \cdot m \left\langle z | x\right\rangle.
	\end{equation*}
	If $x = x_{1}+x_{2}$, where $x_{1} \in \mathcal{H}^{Jm}_{+}, x_{2} \in \mathcal{H}^{Jm}_{-}$, then
	\begin{equation*}
	T(x) = T_{+}(x_{1}) - T_{+}(x_{2} \cdot n) \cdot n.
	\end{equation*}
	By Theorem \ref{complexbound}, there exist a  measure space $(\Omega, \mu), $ a $ \mathbb{C}_{m}$- valued $\mu$- measurable function $\phi$ on $\Omega$ and a unitary operator $U_{+} \colon \mathcal{H}^{Jm}_{+} \to L^{2}( \Omega ; \mathbb{C}_{m}; \mu)$ such that
	\begin{equation*}
	T_{+} = U_{+}^{*}L_{\phi}U_{+},
	\end{equation*}
	where $L_{\phi} \colon L^{2}(\Omega; \mathbb{C}_{m}; \mu) \to L^{2}(\Omega; \mathbb{C}_{m}; \mu)$ is defined by
	\begin{equation*}
	L_{\phi}(g)(t) = \phi(t) \cdot g(t), \; \text{for all}\; g \in L^{2}(\Omega; \mathbb{C}_{m}; \mu).
	\end{equation*}
	By Theorem \ref{extension}, we have $\widetilde{L}_{\phi} \colon L^{2}(\Omega; \mathbb{H}; \mu) \to L^{2}(\Omega; \mathbb{H}; \mu)$ given by
	\begin{equation*}
	\widetilde{L}_{\phi}(g+h\cdot n) = L_{\phi}(g) + L_{\phi}(h) \cdot n, \; \text{for all}\; g, h \in L^{2}( \Omega ; \mathbb{C}_{m}; \mu).
	\end{equation*}
	Let  $\widetilde{L}_{\phi}:= M_{\phi}$. It is clear that ${M}_{\phi}$ is a right $\mathbb{H}$- linear and  ${M}_{\phi}|_{L^{2}( \Omega ; \mathbb{C}_{m}; \mu)} = L_{\phi}$.
	For $h = h_{1}+h_{2} \cdot n \in L^{2}( \Omega ; \mathbb{H}; \mu)$ and $ x \in \Omega $, we have
	\begin{align*}
	{M}_{\phi}(h_{1}+h_{2} \cdot n)(x) &= L_{\phi}(h_{1})(x) + L_{\phi}(h_{2})(x) \cdot n \\
	&= \phi(x)\cdot h_{1}(x)+ \phi(x) \cdot h_{2}(x) \cdot n \\
	&= \phi(x)(h_{1}(x) + h_{2}(x) \cdot n) \\
	&= \phi \cdot (h_{1} + h_{2})(x).
	\end{align*}
	That is ${M}_{\phi}$  is a multiplication operator induced by $\phi$. By Theorem \ref{extension1}, $U_{+}$ has a unique extension $U \colon \mathcal{H} \to L^{2}(\Omega; \mathbb{H}; \mu)$ such that
	\begin{equation*}
	U(x_{1}+x_{2})= U_{+}(x_{1}) - U_{+}(x_{2} \cdot n) \cdot n, \; \text{for all}\; x_{1} \in \mathcal{H}^{Jm}_{+}, x_{2} \in \mathcal{H}^{Jm}_{-}.
	\end{equation*}
	Let $x,y\in \mathcal{H}$. Then $x = x_{1}+x_{2}, y = y_{1} + y_{2}$, where $x_{1},y_{1} \in \mathcal{H}^{Jm}_{+}, x_{2},y_{2} \in \mathcal{H}^{Jm}_{-}$. Consider,
	\begin{align*}
	\Big\langle U^{*}{M}_{\phi}U(x) \; \Big|\; y\Big\rangle	 &= \left\langle M_{\phi}Ux \; \Big|\; Uy \right\rangle \\
	&= \left\langle M_{\phi}(U_{+}(x_{1})- U_{+}(x_{2}\cdot n)\cdot n) \; \Big|\; U_{+}y_{1}-U_{+}(y_{2}\cdot n)\cdot n\right\rangle \\
	&= \left\langle U_{+}^{*}L_{\phi}U_{+}(x_{1}) \; \Big|\; y\right\rangle  - \left\langle U_{+}^{*}L_{\phi}U_{+}(x_{2}\cdot n)\cdot n \; \Big|\;y_{2} \right\rangle \\
	&= \left\langle Tx_{1}-T(x_{2}\cdot n)\cdot n \; \Big|\; y\right\rangle \\
	&= \left\langle Tx \; \Big|\; y\right\rangle.
	\end{align*}
	
	Hence $U^{*}M_{\phi}U = T$.
	
	Proof of (\ref{norm}):
	It is clear from Proposition \ref{extension}(1), that $\|T\| = \|T_{+}\|$ and since $\|T_{+}\| = ess\ sup(|\phi|)$, we have $ \|T\| = ess\ sup (|\phi|)$.
	
	Proof of (\ref{spectrum}): We know that $\sigma(T_{+}) = ess\ ran(\phi)$. By Lemma \ref{extnspectrum}, we have
	\begin{equation*}
	\sigma_{S}(T) = \bigcup\limits_{\lambda \; \in\; ess\ ran(\phi)} [\lambda].
	\end{equation*}
	If $\mathcal{H}$ is separable, by Lemma \ref{separable}, $\mathcal{H}^{Jm}_{+}$ is separable.  Therefore by Theorem \ref{complexbound}, $(\Omega, \mu)$ is $\sigma$- finite.
\end{proof}

\begin{corollary}\label{quaternion}
	Let $m \in \mathbb{S},\; T \in \mathcal{B}(\mathcal{H})$ be a normal and  $\phi$ be as in  Theorem \ref{quaternionbounded}. Then the following hold true:
	\begin{enumerate}
		\item $T$ is anti self-adjoint if and only if $\phi$ is purely imaginary.
		\item $T$ is unitary if and only if $|\phi| = 1$ in $\mu$- a.e.
	\end{enumerate}
\end{corollary}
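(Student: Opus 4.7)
The plan is to transfer both statements to pointwise ($\mu$-a.e.) conditions on $\phi$ via the representation $T = U^{*}M_{\phi}U$ from Theorem \ref{quaternionbounded}. Since $U$ is unitary, $T^{*} = U^{*}M_{\phi}^{*}U$ and $T$ is anti self-adjoint (respectively unitary) if and only if $M_{\phi}$ is. Hence the central computation is to identify $M_{\phi}^{*}$.

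For $f,g \in L^{2}(\Omega;\mathbb{H};\mu)$, associativity of quaternion multiplication and the fact that $\overline{pq}=\overline{q}\,\overline{p}$ in $\mathbb{H}$ give
\begin{align*}
\left\langle M_{\phi}f \,|\, g\right\rangle
&= \int_{\Omega} \overline{\phi(x)\cdot f(x)}\cdot g(x)\, d\mu(x) \\
&= \int_{\Omega} \overline{f(x)}\cdot \overline{\phi(x)}\cdot g(x)\, d\mu(x) \\
&= \left\langle f \,|\, M_{\overline{\phi}}g\right\rangle.
\end{align*}
Since $\overline{\phi}$ is again $\mathbb{C}_{m}$-valued and essentially bounded, $M_{\overline{\phi}}$ is a bounded right $\mathbb{H}$-linear multiplication operator on $L^{2}(\Omega;\mathbb{H};\mu)$, so $M_{\phi}^{*} = M_{\overline{\phi}}$.

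For (1): $T^{*}=-T$ is equivalent to $M_{\overline{\phi}} = -M_{\phi}$, i.e.\ $M_{\phi+\overline{\phi}} = 0$. The standard $L^{2}$-multiplier lemma (a bounded multiplication operator vanishes only when its multiplier vanishes $\mu$-a.e., proved by testing against characteristic functions of finite-measure slices of $\{\phi+\overline{\phi}\neq 0\}$) then forces $\phi+\overline{\phi}=0$ $\mu$-a.e. Writing $\phi = \alpha + m\beta$ with $\alpha,\beta$ real-valued, this says $\alpha=0$ $\mu$-a.e., i.e.\ $\phi$ is purely imaginary. The converse is immediate.

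For (2): being bounded and normal, $T$ is unitary if and only if $T^{*}T = I$; under the unitary equivalence this becomes $M_{\overline{\phi}}M_{\phi} = I$. Associativity yields $M_{\overline{\phi}}M_{\phi} = M_{\overline{\phi}\phi} = M_{|\phi|^{2}}$, and the same multiplier lemma applied to $|\phi|^{2}-1$ gives $|\phi|=1$ $\mu$-a.e.; the reverse direction is again routine. The only non-formal step is the passage from an operator identity $M_{\psi}=0$ to $\psi=0$ $\mu$-a.e., which is the standard multiplication-operator lemma. It causes no trouble here since $\phi$ is essentially bounded, and in the separable (hence $\sigma$-finite by Theorem \ref{quaternionbounded}) case the test-function argument is immediate; in the general case one exhausts $\{\psi \neq 0\}$ by subsets of finite measure obtained from $\{\,1/n < |\psi| \le \|\phi+\overline{\phi}\|_{\infty}\,\}$ inside the essential support of an arbitrary nonzero $f \in L^{2}$, which again is a standard maneuver.
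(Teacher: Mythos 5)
Your proof is correct and follows essentially the same route as the paper: conjugate by $U$ to reduce both statements to the multiplication operator, identify $M_{\phi}^{*}=M_{\overline{\phi}}$, and read off the pointwise conditions $\overline{\phi}=-\phi$ and $|\phi|^{2}=1$ $\mu$-a.e. The only difference is that you spell out the adjoint computation and the ``$M_{\psi}=0\Rightarrow\psi=0$ a.e.'' multiplier lemma, which the paper leaves implicit.
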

\begin{proof} By Theorem \ref{quaternionbounded}, we have $T = U^{*}M_{\phi}U$.
	
	\noindent Proof of $(1):$
	\begin{align*}
	T \; \text{is anti self-adjoint} &\Leftrightarrow T^{*}= -T \\
	&\Leftrightarrow U^{*}M_{\overline{\phi}}U = U^{*}M_{-{\phi}}U \\
	&\Leftrightarrow \overline{\phi} = - \phi.
	\end{align*}
	Proof of $(2):$	
	{\begin{align*}
		T \; \text{is unitary} \; &\Leftrightarrow T^{*}T = TT^{*} = I \\
		&\Leftrightarrow U^{*}M_{|\phi|^{2}}U = I\\
		& \Leftrightarrow U^{*}M_{|\phi|^{2}}U = U^{*}M_{1}U \; ;\; \Big(\text{Here $M_{1}$ is the identity on } L^{2}(\Omega; \mathbb{H}; \mu)\Big)\\
		&\Leftrightarrow |\phi| = 1, \; \text{ $\mu$ - a.e}.  \qedhere
		\end{align*} }
\end{proof}

The following result is well known (see \cite{Ghiloni,viswanath} for details). Here we prove it using a different method.
\begin{corollary}
	Let $T \in \mathcal{B}(\mathcal{H})$ be normal. Then $T$ and $T^{*}$ are unitarily equivalent.
\end{corollary}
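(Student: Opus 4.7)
The plan is to invoke the multiplication form (Theorem \ref{quaternionbounded}) to reduce the statement to producing a unitary on $L^{2}(\Omega;\mathbb{H};\mu)$ that intertwines $M_{\phi}$ and $M_{\overline{\phi}}$. By Theorem \ref{quaternionbounded}, there exist a measure space $(\Omega,\mu)$, a unitary $U\colon \mathcal{H}\to L^{2}(\Omega;\mathbb{H};\mu)$, and an essentially bounded $\mathbb{C}_m$-valued function $\phi$ such that $T=U^{*}M_{\phi}U$; taking adjoints gives $T^{*}=U^{*}M_{\overline{\phi}}U$. Thus it suffices to construct a unitary $W$ on $L^{2}(\Omega;\mathbb{H};\mu)$ with $WM_{\phi}W^{*}=M_{\overline{\phi}}$, because $V:=U^{*}WU$ is then a unitary on $\mathcal{H}$ satisfying $VTV^{*}=T^{*}$.

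For the candidate intertwiner, I would fix $n\in\mathbb{S}$ with $mn=-nm$ and define $W\colon L^{2}(\Omega;\mathbb{H};\mu)\to L^{2}(\Omega;\mathbb{H};\mu)$ by $W(f)(x)=n\cdot f(x)$. Since $n$ is applied on the left while the right $\mathbb{H}$-module structure acts on the other side, $W$ is right $\mathbb{H}$-linear. It preserves norms because $|n|=1$, and a direct calculation of $\langle Wf\mid g\rangle$ using $\overline{n}=-n$ yields $W^{*}=-W$; together with $W^{2}=-I$ this gives $W^{*}W=WW^{*}=I$, so $W$ is unitary.

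The crucial algebraic identity is that $nq=\overline{q}\,n$ for every $q\in\mathbb{C}_m$: writing $q=\alpha+m\beta$ with $\alpha,\beta\in\mathbb{R}$, one has $nq=\alpha n+nm\beta=\alpha n-mn\beta=(\alpha-m\beta)n=\overline{q}\,n$. Applying this pointwise to $\phi$, together with $W^{*}=-W$ and $n^{2}=-1$, gives
\begin{equation*}
(WM_{\phi}W^{*})(f)(x)=-n\,\phi(x)\,n\,f(x)=-\overline{\phi(x)}\,n^{2}\,f(x)=\overline{\phi(x)}\,f(x),
\end{equation*}
so $WM_{\phi}W^{*}=M_{\overline{\phi}}$. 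Setting $V=U^{*}WU$ then yields $VTV^{*}=T^{*}$.

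The main (minor) obstacle is sidedness bookkeeping: the module structure on $L^{2}(\Omega;\mathbb{H};\mu)$ is a right action while both $M_{\phi}$ and $W$ multiply from the left, so one must carefully verify right $\mathbb{H}$-linearity of $W$ and track the non-commutativity of $\mathbb{H}$ when sliding $n$ past $\phi(x)$. Beyond that the argument is a short computation, and the existence of an $n\in\mathbb{S}$ anticommuting with $m$ (the source of the whole construction) is immediate from the definition of $\mathbb{S}$.
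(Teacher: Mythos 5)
Your argument is correct and follows essentially the same route as the paper: reduce to the multiplication form $T=U^{*}M_{\phi}U$, hence $T^{*}=U^{*}M_{\overline{\phi}}U$, and then conjugate by a left-multiplication unitary built from an $n\in\mathbb{S}$ anticommuting with $m$, the key identity in both cases being $nq=\overline{q}\,n$ for $q\in\mathbb{C}_{m}$. The only difference is your choice of intertwiner: you use the constant symbol $n$ (i.e.\ $W=M_{n}$, with $W^{*}=-W$ and $W^{2}=-I$), whereas the paper conjugates by $M_{\xi n/|\xi|}$; your version is in fact slightly cleaner, since it avoids having to worry about the set where the symbol $\xi$ vanishes.
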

\begin{proof}
	For a fixed $m \in \mathbb{S}$, by Theorem \ref{quaternionbounded}, there exists a Hilbert basis $\mathcal{N}_{m}$, a measure space $(\Omega, \mu) ,\ \mu$- measurable $\mathbb{C}_{m}$- valued function $\xi$ on $\Omega$ and a unitary $U \colon \mathcal{H} \to L^{2}(\Omega;\mathbb{H}; \mu)$ so that if $T$ is expressed with respect to $\mathcal{N}_{m}$, we have $T = U^{*}M_{\xi}U$. This implies $T^{*} = UM_{\overline{\xi}}U^{*}$.
	
	For $x \in \Omega$, define $\phi(x)=\frac{\xi(x)\cdot n}{|\xi(x)|}$, for all $x\in \Omega$. Here $n\in \mathbb S$ is such that $m\cdot n=-n\cdot m$. Clearly, $\phi$ is non-zero almost everywhere w.r.to $\mu$,  ess  sup $(|\phi|) = 1$ and $M_{\phi}$ is a right linear, unitary operator. Moreover,
	$M_{\phi}^{*}M_{\overline{\xi}}M_{\phi} = M_{{\xi}}$ and
	\begin{equation*}
	T = U^{*}M_{\xi} U = U^{*}M_{\phi}^{*} M_{\overline{\xi}}M_{\phi}U= U^{*}M_{\phi}^{*}U T^{*} U^{*}M_{\phi}U.
	\end{equation*}
	Let $V = U^{*}M_{\phi}U$. Then $V$ is unitary and $T = V^{*}T^{*}V$. Hence $T$ and $T^{*}$ are unitarily equivalent.
\end{proof}
We illustrate our main theorem by the following example.
\begin{eg}\label{multiplicationex}
	Let $\phi (t) = (i-j-k)t$, for all $t \in [0,1]$ . Then $\phi$ is essentially bounded measurable function  with the Lebesgue measure $\mu$ on $[0,1]$. Define  $M_{\phi} \colon L^{2}\left([0,1];\mathbb{H}; \mu\right) \to L^{2}([0,1];\mathbb{H}; \mu)$ by
	\begin{equation*}
	M_{\phi}(g)(t) = \phi(t) \cdot g(t)\; \text{for all}\; g \in L^{2}([0,1];\mathbb{H}; \mu).
	\end{equation*}
	Fix $i \in \mathbb{S}$. We show that $M_{\phi}$ is unitarily equivalent to a multiplication operator on $L^{2}([0,1];\mathbb{H}; \mu)$ induced by some complex valued measurable function.
	
	Define $U \colon L^{2}([0,1];\mathbb{H}; \mu) \to L^{2}([0,1];\mathbb{H}; \mu)$ by
	\begin{equation*}
	U(g)(t) = \frac{(\sqrt{3}+1)- j + k}{\sqrt{6+2 \sqrt{3}}} \cdot g(t), \; \text{for all}\; g \in L^{2}([0,1];\mathbb{H}; \mu).
	\end{equation*}
	It follows that
	\begin{equation*}
	U^{*}(h)(t) = \frac{(\sqrt{3}+1)+ j - k}{\sqrt{6+2 \sqrt{3}}},\;  \text{for all} \;h \in L^{2}([0,1];\mathbb{H}; \mu).
	\end{equation*}
	It can be easily verified that $U$ is unitary.
	
	Define $\eta(t) = \sqrt{3}\, i t $, for all  $t\in [0,1]$. Clearly, $\eta$ is a complex valued essentially bounded measurable function. Also $\eta$ induces a bounded multiplication operator $M_{\eta}$ on $L^{2}([0,1];\mathbb{H}; \mu)$. We prove that $M_{\phi}$ is unitarily equivalent to $M_{\eta}$. For all $g \in L^{2}([0,1];\mathbb{H}; \mu)$, we have
	\begin{align*}
	U^{*}M_{\eta}U(g)(t) &= U^{*}M_{\eta}\frac{(\sqrt{3}+1)- j + k}{\sqrt{6+2 \sqrt{3}}} \cdot g(t)\\
	&= U^{*}\sqrt{3}it\cdot \frac{(\sqrt{3}+1)- j + k}{\sqrt{6+2 \sqrt{3}}} \cdot g(t)\\
	&=\frac{(\sqrt{3}+1)+ j - k}{\sqrt{6+2 \sqrt{3}}} \cdot \sqrt{3}it\cdot \frac{(\sqrt{3}+1)- j + k}{\sqrt{6+2 \sqrt{3}}} \cdot g(t)\\
	&= (i-j-k)t \cdot g(t) \\
	&= M_{\phi}(g)(t).
	\end{align*}
\end{eg}
\begin{note}
	In Example \ref{multiplicationex}, we have shown that the multiplication operator $M_{\phi}$ (induced by a $\mathbb{H}$- valued function $\phi$)  is unitarily equivalent to multiplication operator induced by $\mathbb{C}$- valued function.
\end{note}
\section{Spectral theorem: Unbounded operators}
In this section we prove the spectral theorem (multiplication form) for unbounded quaternionic normal operators. First we restrict the given quaternionic normal operator to the slice Hilbert space, later establish the result via bounded transform and  Theorem \ref{complexbound}.

\begin{theorem}\label{unboundedmultiplication}
	Let $T \in \mathcal{C}(\mathcal{H})$ be normal and $ m \in \mathbb{S}$.  Then there exists the following:
	
	\begin{enumerate}
		\item[(a)] a Hilbert basis $\mathcal{N}_{m}$ of $\mathcal{H}$
		\item[(b)] a  measure space $(\Omega_{0}, \nu)$
		\item[(c)] a unitary operator $V \colon \mathcal{H} \to L^{2}(\Omega_{0}; \mathbb{H}; \nu)$ and
		\item[(d)]  a $\nu$- measurable function $ \phi \colon \Omega_{0} \to \mathbb{C}_{m}$
	\end{enumerate}
	so that if $T$ is expressed with respect to $\mathcal{N}_{m}$, then
	\begin{equation*}
	Tx = U^{*}M_{\eta} Ux, \; \text{for all } \; x\in \mathcal{D}(T),
	\end{equation*}
	where $M_{\eta}$ is quaternionic multiplication operator on  $L^{2}(\Omega_{0}; \mathbb{H}; \nu)$ induced by $\eta$, with the domain
	\begin{equation*}
	\mathcal{D}(M_{\eta}) = \Big\{g \in L^{2}(\Omega_{0}; \mathbb{H}; \nu)| \eta \cdot g \in L^{2}(\Omega_{0}; \mathbb{H}; \nu)\Big\}.
	\end{equation*}
\end{theorem}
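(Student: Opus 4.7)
The plan is to reduce the unbounded case to the already-proved bounded multiplication form (Theorem \ref{quaternionbounded}) by passing through the bounded transform $\mathcal{Z}_{T} = T(I+T^{*}T)^{-1/2}$ of Theorem \ref{Ztransform}. The key input will be the reconstruction formula $T = \mathcal{Z}_{T}(I - \mathcal{Z}_{T}^{*}\mathcal{Z}_{T})^{-1/2}$ together with the domain identity $\mathcal{D}(T) = \mathcal{D}\bigl((I - \mathcal{Z}_{T}^{*}\mathcal{Z}_{T})^{-1/2}\bigr)$ (which holds because $\mathcal{Z}_{T}$ is bounded, so the composition shrinks no domain).

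By Theorem \ref{Ztransform}, $\mathcal{Z}_{T} \in \mathcal{B}(\mathcal{H})$ is normal with $\|\mathcal{Z}_{T}\| \leq 1$. Applying Theorem \ref{quaternionbounded} to $\mathcal{Z}_{T}$ for the fixed $m$ produces a Hilbert basis $\mathcal{N}_{m}$ of $\mathcal{H}$, a measure space $(\Omega_{0}, \nu)$, a unitary $U \colon \mathcal{H} \to L^{2}(\Omega_{0}; \mathbb{H}; \nu)$, and an essentially bounded $\nu$-measurable $\psi \colon \Omega_{0} \to \mathbb{C}_{m}$ with $\mathcal{Z}_{T} = U^{*}M_{\psi}U$ and $\mathrm{ess\,sup}\,|\psi| \leq 1$. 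Since $\mathcal{D}(T)$ is dense, the range of $(I - \mathcal{Z}_{T}^{*}\mathcal{Z}_{T})^{1/2}$ is dense, forcing $\ker(I - \mathcal{Z}_{T}^{*}\mathcal{Z}_{T}) = \{0\}$; transported by $U$ this yields $\ker M_{1 - |\psi|^{2}} = \{0\}$, so $|\psi(x)| < 1$ for $\nu$-a.e.\ $x$. Define the $\nu$-measurable $\mathbb{C}_{m}$-valued function
\[
\eta(x) := \psi(x)\bigl(1 - |\psi(x)|^{2}\bigr)^{-1/2}.
\]

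Next I would identify $T$ with $U^{*}M_{\eta}U$. The crucial step is the intertwining of the unbounded functional calculus: $U^{*}M_{(1 - |\psi|^{2})^{-1/2}}U = (I - \mathcal{Z}_{T}^{*}\mathcal{Z}_{T})^{-1/2}$ as unbounded operators, with matching domains. Granting this, and since $\psi$ is essentially bounded, the composition $M_{\psi}\cdot M_{(1 - |\psi|^{2})^{-1/2}} = M_{\eta}$ with $\mathcal{D}(M_{\eta}) = \mathcal{D}(M_{(1 - |\psi|^{2})^{-1/2}})$. Combining with Theorem \ref{Ztransform}(1) gives
\[
T = \mathcal{Z}_{T}(I - \mathcal{Z}_{T}^{*}\mathcal{Z}_{T})^{-1/2} = U^{*}M_{\psi}U \cdot U^{*}M_{(1-|\psi|^{2})^{-1/2}}U = U^{*}M_{\eta}U,
\]
with $U(\mathcal{D}(T)) = \mathcal{D}(M_{\eta})$, as required.

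The main obstacle is justifying the functional-calculus intertwining for the unbounded Borel function $t \mapsto (1 - t)^{-1/2}$ on $\mathrm{spec}(I - \mathcal{Z}_{T}^{*}\mathcal{Z}_{T}) \subset [0,1]$. I would handle this by passing to the slice Hilbert space $\mathcal{H}^{Jm}_{+}$, where $J$ is the anti self-adjoint unitary commuting with $T$ furnished by Theorem \ref{unboundedcartesian}: the restriction of $\mathcal{Z}_{T}$ to $\mathcal{H}^{Jm}_{+}$ is a bounded $\mathbb{C}_{m}$-linear normal operator whose classical complex Borel functional calculus is standard, and the identification $L^{2}(\Omega_{0};\mathbb{C}_{m};\nu) = \bigl(L^{2}(\Omega_{0};\mathbb{H};\nu)\bigr)^{J'm}_{+}$ from the corollary after Theorem \ref{directsum} ensures that $M_{(1 - |\psi|^{2})^{-1/2}}$ really is the extension of a multiplication operator on the slice. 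Lifting back to $\mathcal{H}$ via Theorem \ref{extension1} together with Lemma \ref{Ztransformextension} then transports the intertwining from the slice to the full quaternionic setting, closing the argument.
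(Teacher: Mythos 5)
Your proposal is correct and follows essentially the same route as the paper: bounded transform $\mathcal{Z}_T$, the complex spectral theorem on the slice $\mathcal{H}^{Jm}_{+}$ (with $J$ furnished by Theorem \ref{unboundedcartesian}), the Borel functional calculus for $p \mapsto p\,(1-|p|^{2})^{-1/2}$ to recover $T$ from $\mathcal{Z}_T$, and a lift back to $\mathcal{H}$ via the extension results and Lemma \ref{Ztransformextension}. The only differences are cosmetic: the paper descends to the slice at the outset and works with $T_{+}$ and $\mathcal{Z}_{T_{+}}$ throughout, and it additionally pushes the measure forward along $\xi$ so that the final symbol is the identity function on $\Omega_{0}=\sigma(T_{+})$ --- a normalization the statement does not require --- while your explicit verification that $|\psi|<1$ $\nu$-a.e.\ (from density of $\mathcal{D}(T)$) makes precise a point the paper passes over quickly.
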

\begin{proof}
	By Proposition \ref{extension}, there exists a unique $\mathbb{C}_{m}$- linear operator $T_{+} \colon \mathcal{D}(T) \cap \mathcal{H}^{Jm}_{+} \to \mathcal{H}^{Jm}_{+}$ such that $\widetilde{T}_{+} = T$. Let $\mathcal{N}_{m}$ be a Hilbert basis  of $\mathcal{H}^{Jm}_{+}$. Then  by Remark \ref{J},   $\mathcal{N}_{m}$ is Hilbert basis for $\mathcal{H}$ and Moreover,
	\begin{equation*}
	J(x) = \sum\limits_{z \in \mathcal{N}} z \cdot m \left\langle z | x\right\rangle.
	\end{equation*}
	It is clear that $\widetilde{\mathcal{Z}}_{T_{+}} = \mathcal{Z}_{T}$ and $\mathcal{Z}_{T_{+}}$ is bounded $\mathbb{C}_{m}$- linear operator. By Theorem \ref{complexbound}, there is a measure space $(\Omega; \mu)$, a unitary operator $U_{+} \colon \mathcal{H}^{Jm}_{+} \to L^{2}(\Omega; \mathbb{C}_{m}; \mu)$ and a $\mu$- measurable function $\phi$ such that
	\begin{equation}\label{ztransform}
	\mathcal{Z}_{T_{+}} = U_{+}^{*}L_{\phi}U_{+}.
	\end{equation}
	Here $\Omega = \sigma(\mathcal{Z}_{T_{+}})$ and $\phi(z) = z$, for all $z \in \Omega$. Define  $ \xi \colon \Omega \to \mathbb{C}_{m}$ by
	\begin{equation*}
	\xi(p) = p (1-|p|^{2})^{-\frac{1}{2}},\; \text{for all}\; p \in \Omega.
	\end{equation*}
	Then $\xi$ is  Borel measurable function such that
	\begin{equation*}
	\mu \big(\{x \in \Omega: \xi(x) = \infty\}\big) = 0
	\end{equation*}
	By the Borel functional calculus for bounded
	$\mathbb{C}_{m}$- linear operator $Z_{T_{+}}$, we get
	\begin{equation}\label{funcionalcalculus}
	\xi(\mathcal{Z}_{T_{+}}) = \mathcal{Z}_{T_{+}}(I - \mathcal{Z}_{T_{+}}^{*}\mathcal{Z}_{T_{+}})^{-\frac{1}{2}}= T_{+}.
	\end{equation}
	By Equations (\ref{ztransform}) and (\ref{funcionalcalculus}), we have
	\begin{align*}
	T_{+} &= U_{+}^{*}L_{\phi}U (I - U_{+}^{*}L_{|\phi|^{2}}U_{+})^{-\frac{1}{2}}\\
	& = U_{+}^{*}L_{\phi(1-|\phi|^{2})^{-\frac{1}{2}}}U_{+}.
	\end{align*}
	Let us denote $\psi = \phi(1-|\phi|^{2})^{-\frac{1}{2}}$. Then
	\begin{equation}\label{Tplus}
	T_{+}x = U_{+}^{*}L_{\psi}U_{+}x, \; \text{for all } \; x \in \mathcal{D}(T_{+}).
	\end{equation}
	This implies that $U_{+}(\mathcal{D}(T_{+})) \subset \mathcal{D}(L_{\psi})$.
	It is clear $\sigma(T_{+}) = ess \ ran (\psi) = \Omega_{0}$ (say). Define a measure on $\Omega_{0}$ as $\nu(S) = \mu(\xi^{-1}(S))$, for every Borel subset S in $\Omega_{0}$.
	
	If $\eta(z) = z$ on $\Omega_{0}$, then $L_{\eta} \colon \mathcal{D}(L_{\eta}) \to L^{2}(\Omega_{0}; {\mathbb{C}_{m}}; \nu)$ defines a $\mathbb{C}_{m}$- linear operator in $L^{2}(\Omega; \mathbb{C}_{m}; \nu)$ with the domain $\mathcal{D}(L_{\eta}) = \{g \in L^{2}(\Omega_{0}; \mathbb{C}_{m}; \nu)| \eta \cdot g \in L^{2}(\Omega_{0}; \mathbb{C}_{m}; \nu)\}$. We establish a unitary between $L^{2}(\Omega; \mathbb{C}_{m}; \mu) $ and $L^{2}(\Omega_{0}; \mathbb{C}_{m}; \nu) $ as follows:
	Define $\pi \colon L^{2}(\Omega; \mathbb{C}_{m}; \mu) \to L^{2}(\Omega_{0}; \mathbb{C}_{m}; \nu) $ by
	\begin{equation*}
	\pi(g) = g \circ \xi^{-1},\; \text{for all} \; g \in L^{2}(\Omega; \mathbb{C}_{m}; \mu).
	\end{equation*}
	We claim that $\pi$ is  unitary. For $g \in L^{2}(\Omega; \mathbb{C}_{m}; \mu)$, we have
	\begin{align*}
	\|\pi(g)\|^{2} = \int\limits_{\Omega_{0}}|(g\circ \xi^{-1})(s)|^{2} d\nu(s)
	&= \int\limits_{\Omega_{0}}|g(\xi^{-1}(s))|^{2} d\nu(s)\\
	&= \int\limits_{\Omega} |g(t)|^{2}d\mu(t)\\
	&= \|g\|^{2}.
	\end{align*}
	This shows that $\pi$ is one to one as well as well defined. If $h \in L^{2}(\Omega_{0}; \mathbb{C}_{m}; \nu)$,  then $h\circ \xi \in L^{2}(\Omega; \mathbb{C}_{m}; \mu)$ such that $\pi(h\circ \xi) = h$. This implies $\pi$ is onto.
	
	Let $g \in L^{2}(\Omega; \mathbb{C}_{m}; \mu)$ and $h \in L^{2}(\Omega_{0}; \mathbb{C}_{m}; \nu)$. Then
	\begin{align*}
	\left\langle \pi(g) | h\right\rangle = \int\limits_{\Omega_{0}} \overline{\pi(g)(t)} h(t) d\nu(t)
	&= \int\limits_{\Omega_{0}} \overline{g(\xi^{-1}t)} h(t) d\nu(t)\\
	&= \int\limits_{\Omega} g(s) h(\xi(s)) d\mu(s)\\
	&= \left\langle g | h \circ \xi \right\rangle.
	\end{align*}
	This implies $\pi^{*}(h) = h\circ \xi$, for all $h \in L^{2}(\Omega_{0}; \mathbb{C}_{m}; \nu)$. It can be verified that $\pi^{*}\pi = \pi \pi^{*} = I$. First we express $L_{\eta}$ in terms of $L_{\psi}$. Later we construct unitary $V_{+}$ between $\mathcal{H}^{Jm}_{+}$ and $L^{2}(\Omega_{0}; \mathbb{C}_{m}; \nu)$.
	Consider
	\begin{align*}
	(\pi^{*}L_{\eta}\pi)(g)(x) = \pi^{*}L_{\eta}(g \circ \xi^{-1})(x)
	&= \pi(\eta \cdot g\circ \xi^{-1})(x)\\
	&= \eta \cdot (g\circ \xi^{-1}) \circ \xi (x)\\
	&= (\eta \circ \xi)(x) \cdot g(x)\\
	&= \psi(x) \cdot g(x)  \\
	&= L_{\psi} (g)(x).
	\end{align*}
	This shows $ \pi^{*}L_{\eta}\pi = L_{\psi} $.
	Define $V_{+} \colon \mathcal{H}^{Jm}_{+} \to L^{2}(\Omega_{0}; \mathbb{C}_{m}; \nu)$ by
	\begin{equation*}
	V_{+} = \pi \circ U_{+}.
	\end{equation*}
	
	The following  diagram helps in understanding the construction of unitary operators.
	\begin{center}
		\begin{tikzpicture}[scale= 3.5]
		\node (A) at (0,1) {$\mathcal{D}(L_{\psi}) \subseteq L^{2}(\Omega; \mathbb{C}_{m}; \mu) $};
		\node (B) at (1,1) {$L^{2}(\Omega; \mathbb{C}_{m}; \mu)$};
		\node (C) at (0,0) {$\mathcal{D}(L_{\eta})\subseteq L^{2}(\Omega_{0}; \mathbb{C}_{m}; \nu)$};
		\node (D) at (1,0) {$L^{2}(\Omega_{0}; \mathbb{C}_{m}; \nu)$};
		\node (E) at (-1,0) {$\mathcal{H}^{Jm}_{+}$};
        \path[->] (A) edge node[above]{$L_{\psi}$} (B)
       edge node[right]{$\pi$} (C)
     	(D) edge node[right]{$\pi^{*}$} (B)
		(C) edge node[above]{$L_{\eta}$} (D)
		(E) edge node[left]{$U_{+}$} (A)
		(E) edge node[above]{$V_{+}$} (C);
		\end{tikzpicture}
	\end{center}

	We claim that $V_{+}$ is unitary and $V_{+}^{*}L_{\eta}V_{+}x = T_{+}x$ for all $x\in \mathcal{D}(T_+)$. Since $\pi$ and $U_{+}$ are unitary, it can be easily seen that $V_{+}^{*}V_{+} = V_{+}V_{+}^{*}=I$. Then by Equation (\ref{Tplus}), we have
	\begin{align*}
	T_{+}x &= U_{+}^{*}\pi^{*}L_{\eta}\pi U_{+}x\\
	&= V_{+}^{*}L_{\eta}V_{+}x,
	\end{align*}
	for all $x\in \mathcal D(T_+)$.
	
	Now extend the operator $T_{+}$ to the operator $T$ in $\mathcal{H}$ by using Proposition \ref{extension} and Theorem \ref{extension1}. The rest of the proof follows in the similar lines as in the case of bounded operators. Let $\widetilde{L_{\eta}} = M_{\eta}$ and $\widetilde{V_{+}} = V$. Then by extension of $T_{+}$ we get
	\begin{equation*}
	Tx = V^{*} {M}_{\eta} Vx,\; \text{for all}\; x\in \mathcal{D}(T),
	\end{equation*}
	where ${M}_{\eta}$ is the quaternionic multiplication operator in $L^{2}(\Omega_{0}; \mathbb{H}; \nu)$.
\end{proof}

\section*{Acknowledgment}
	\thanks{The second author is thankful to INSPIRE (DST) for the support in the form of fellowship (No. DST/ INSPIRE Fellowship/2012/IF120551), Govt of India.}


\end{document}